\author{Mireille Boutin}
\address{Department of Mathematics, Purdue
  University, 150 N. University St., West Lafayette, IN, 47906, USA}
\email{mboutin@purdue.edu}
\author{Gregor Kemper} \address{Technische Universit\"at M\"unchen,
  Zentrum Mathematik - M11, Boltzmannstr. 3, 85748 Garching, Germany}
\email{kemper@ma.tum.de}
\title{Can a Ground-Based Vehicle Hear the Shape of a Room?}
\date{\today}
\subjclass[2010]{51K99, 13P10, 13P25}
\keywords{Geometry from echoes, echo sorting, shape reconstruction}
\begin{document}

\begin{abstract}
  Assume that a ground-based vehicle moves in a room with walls or other planar surfaces. Can the vehicle reconstruct the positions of the walls from the echoes of a single sound event? We assume that the vehicle carries some microphones and that a loudspeaker is either also mounted on the vehicle or placed at a fixed location in the room. We prove that the reconstruction is almost always possible if (1) no echoes are received from floors, ceilings or sloping walls and the vehicle carries at least three non-collinear microphones, or if (2) walls of any inclination may occur, the loudspeaker is fixed in the room and there are four non-coplanar microphones.
  
  The difficulty lies in the echo-matching problem: how to determine which echoes come from the same wall. We solve this by using a Cayley-Menger determinant. Our proofs use methods from computational commutative algebra.
\end{abstract}

\maketitle

\section*{Introduction} \label{sIntro}%

This paper is concerned with the problem of reconstructing the position of walls and other planar surfaces using echoes. More specifically, an omni-directional loudspeaker produces a short, high frequency impulse. The echoes of the impulses are captured by some microphones.
The loudspeaker and the microphones are assumed to be synchronized.
The walls are modeled using {\em mirror points}, which are the reflections of the loudspeaker with respect to the planar surfaces. 
The times of arrival of an echo give us the total distance travelled by the sound, 
which is equal to the distance from the microphone to a mirror point. 
We are interested in determining to what extent it is theoretically possible to reconstruct the wall positions from the distance measurements obtained in this fashion. All measurements are assumed to be exact, and the computations are assumed to be performed with infinite precision. Yet, even in this theoretical scenario, it may be impossible, at least in some circumstances, to correctly reconstruct the wall positions. One of the goal of our work is to narrow down the problematic cases and show that one can expect to avoid them in some common application scenarios. Here, we are thinking of a vehicle carrying the microphones and having the ability to move more or less freely on the ground, for example, a robot rolling on the floor inside a warehouse.

In previous work~[\citenumber{Boutin:Kemper:2019}], we considered the case of a drone or, more generally, a vehicle with all six degrees of freedom (the three coordinate axes and yaw, pitch and roll). If there are four microphones mounted on the vehicle and there is a loudspeaker either on the vehicle or at a fixed location in the room, then our algorithm~[\citenumber{Boutin:Kemper:2019}] reconstructs the position of every wall from which an echo is received by all microphones. But it cannot be avoided that for certain ``bad'' vehicle positions a ``ghost wall'', one which is not really there, is detected. Our main result states that the bad positions are rare. However, to get out of a possibly bad position the vehicle may need to use all six degrees of freedom, including pitch and roll. For a drone this means that being in a good position may result in receiving a thrust in some direction, so the drone cannot hover while carrying out the wall reconstruction. So we are interested in investigating whether our results carry over to a situation where the degrees of freedom are restricted to the coordinate axes and yaw. Perhaps more interesting is the case of a ground-based vehicle. These are the scenarios considered in this paper.

The restriction of degrees of freedom not only makes the mathematical arguments harder but also yields somewhat weaker results. In fact, we show in \cref{sStacks} that there are ``unlucky'' wall arrangements for which it is not true that almost all vehicle positions are good, if the degrees of freedom are restricted as stated above. So the cases of a ``hovering drone'' and a ground-based vehicle really are more difficult. The first main result (\cref{2tMain}) deals with the case of a ground-based vehicle and a loudspeaker placed at a fixed position in the room, and gives a precise characterization of the wall arrangements for which it is not true that almost all vehicle positions are good. These are the arrangements where an ``unlucky stack of mirror points'', as defined in \cref{sStacks}, occurs, and they are themselves very rare. The second main result (\cref{2tHovering}) treats the case of a hovering drone (four degrees of freedom) and, perhaps surprisingly, reaches the same conclusion: the additional degree of freedom does not result in fewer exceptional wall arrangements. The proofs of these two results are similar in spirit to the proofs in~[\citenumber{Boutin:Kemper:2019}], and in particular involve large computations in ideal theory done by computer. But apart from the new aspect of exceptional wall arrangements, there arise further theoretical difficulties that are explained after the statement of \cref{2tMain}. Analyzing our proof method shows that for each degree of freedom taken away, the final computation requires roughly two additional variables. This is why computational bottlenecks prevented us from achieving general results in the case that the loudspeaker is mounted on the vehicle.

In the first result of this paper, the vehicle moves in two dimensions, but still lives in three-dimensional space, meaning that the walls it detects may be floors, ceilings, and sloping walls. The last result (\cref{3t2D}) of this paper concerns the truly two-dimensional case. In real-world terms, this means that no echoes should be received from floors, ceilings, and sloping walls, since they cannot be dealt with. This case is much easier, has no exceptional wall arrangements, only requires three microphones, and also allows for the loudspeaker to be mounted on the vehicle, which we were not able to address in a satisfactory way in three dimensions.

In this paper, a room is just a spacial arrangement of walls, and a wall is just a plane surface. We only consider first order echoes, meaning that the sound has not bounced twice or more often. A further assumption is that the sound frequency is high enough to justify the use of ray acoustics. When we say that almost all vehicle positions are good, we mean that within the configuration space of all vehicle positions, the bad ones form a subset of volume zero. In fact, we use this language only if the bad positions are contained in a subvariety of lower dimension. Intuitively ``almost all'' can be thought of as ``with probability one.''

\par{\bf Acknowledgments.} This work has benefited immensely from a research stay of the two authors at the Banff International Research Station for Mathematical Innovation and Discovery (BIRS) under the ``Research in Teams'' program. We would like to thank BIRS for its hospitality and for providing an optimal working environment.

\section{Related work}
The relationship between the geometry of a room and the source-to-receiver acoustic impulse response has long been a subject of interest, in particular for the design of concert halls with desirable acoustic properties. For example, the two-point impulse response within rectangular rooms was analyzed in [\citenumber{alien1976image}]. The results were later extended to arbitrarily polyhedral rooms  in [\citenumber{borish1984extension}]. Subsequent work used the impulse response of a room to reconstruct its geometry, including that of objects within the room (e.g. [\citenumber{moebus2007three}]).

Some reconstruction methods are focused on 2D room geometry (e.g., [\citenumber{dokmanic2011can}],\citenumber{antonacci2012inference}], [\citenumber{el2016reflector}]) while others consider a more general but still constrained 3D geometry. For example,  \mycite{ParkChoi2021} reconstruct the wall surface for a convex, polyhedral and bounded room using several loudspeakers and microphones in known positions. Other methods are applicable to reconstructing of arbitrarily planar surfaces in 3D (e.g. [\citenumber{DPWLV1}]).

Some methods reconstruct wall points directly  (e.g., [\citenumber{canclini2011exact}], [\citenumber{filos2011robust}], [\citenumber{remaggi20153d}], [\citenumber{el2016reflector}], [\citenumber{ParkChoi2021}])
while others, like us, reconstruct mirror points representing the reflection of a loudspeaker with respect to a wall  plane (e.g.,  [\citenumber{tervo20123d}], [\citenumber{mabande2013room}], [\citenumber{DPWLV1}], [\citenumber{remaggi2016acoustic}], [\citenumber{Baba2017}]).
Some, like us, assume that the microphones are synchronized (e.g.,  [\citenumber{tervo20123d}],  [\citenumber{DPWLV1}],[\citenumber{jager2016room}],  [\citenumber{rajapaksha2016geometrical}],), while others do not (e.g., [\citenumber{Scheuing2008}], [\citenumber{pollefeys2008direct}],  [\citenumber{antonacci2012inference}]).

In certain setups,  the microphones are placed on a vehicle such as a robot (e.g. [\citenumber{peng2015room}]) or a drone [\citenumber{Boutin:Kemper:2019}]. Recent work  uses cell phones as a cheaper alternative. For example, \mycite{shih2019can} place an omnidirectional speaker at a known position in a polyhedral room 
and use a cell phone to record the echoes while \mycite{zhou2017batmapper} assume a smart phone is  held horizontally by a person walking along rooms and corridors. 

The vast majority of the work in the literature is focused on the numerical reconstruction 
of the room, including the difficult problem of labeling echoes coming from the same surface. However, from a theoretical standpoint, there is still a lot to be understood regarding the well-posedness of different reconstruction setups, especially for non-generic room geometries or receiver and source placements.

\section{Unlucky stacks of mirror points} \label{sStacks}

\newcommand{\ve}[1]{\mathbf{#1}}%
\newcommand{\re}{\operatorname{ref}}%
\newcommand{\ini}{{\operatorname{ini}}}%
\newcommand{\Mini}{M_\ini}%
\newcommand{\ASO}{\operatorname{ASO}}%
\newcommand{\h}{{\operatorname{hom}}}
\newcommand{\vertex}[1][black]{\node[#1,circle, draw, outer sep=3pt,
  inner sep=0pt, minimum size=3pt,fill]}

As mentioned above, the primary
challenge in wall detection from echoes is the matching of echoes
heard by different microphones, i.e., determining which echoes come
from the same wall and which ones do not.

%\subsection{Deceived by their own images} \label{sImages}

Let us recall an example from~[\citenumber{Boutin:Kemper:2019}] where
this matching inevitably goes wrong, leading to a ghost wall. As it
turns out, this type of example will play a crucial role in the present paper.
\cref{1fBad}%
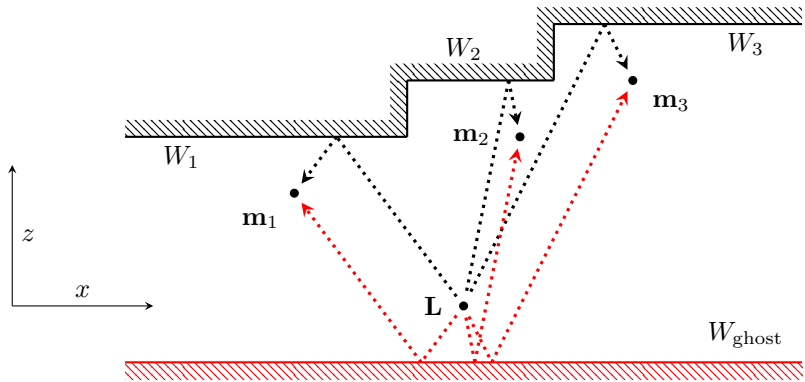
\begin{figure}[htbp]
  \centering
  \begin{tikzpicture}[scale=0.75]
    \fill[pattern=north west lines,pattern color=red] (1,1) --
    (13,1) -- (13,0.7) -- (1,0.7) -- (1,1);
    \draw[red,thick] (1,1) -- (13,1);
    \draw (12,1.1) node[above]{$W_{\operatorname{ghost}}$};
    \vertex (L) at (7,2)  [label=left:$\ve L$] {};
    \vertex (m1) at (4,4) [label=-120:$\ve m_1$] {};
    \vertex (m2) at (8,5) [label=left:$\ve m_2\ $] {};
    \vertex (m3) at (10,6) [label=-30:$\ve m_3$] {};
    \fill[pattern=north west lines] (1,5) -- (6,5) -- (6,6) --
    (8.6,6) -- (8.6,7) -- (13,7) -- (13,7.3) -- (8.3,7.3) --
    (8.3,6.3) -- (5.7,6.3) -- (5.7,5.3) -- (1,5.3) -- (1,5);
    \draw[thick] (1,5) -- (6,5);
    \draw (2,5) node[below]{$W_1$};
    \draw[thick] (6,5) -- (6,6);
    \draw[thick] (6,6) -- (8.6,6);
    \draw (7,6.2) node[above]{$W_2$};
    \draw[thick] (8.6,6) -- (8.6,7);
    \draw[thick] (8.6,7) -- (13,7);
    \draw (12,7) node[below]{$W_3$};
    \draw[very thick,dotted,>=stealth,->] (3/4*4+7/4,5) -- (m1);
    \draw[very thick,dotted] (L) -- (3/4*4+7/4,5);
    \draw[very thick,dotted,>=stealth,->] (4/5*8+1/5*7,6) -- (m2);
    \draw[very thick,dotted] (4/5*8+1/5*7,6) -- (L);
    \draw[very thick,dotted,>=stealth,->] (5/6*10+7/6,7) -- (m3);
    \draw[very thick,dotted] (5/6*10+7/6,7) -- (L);
    \draw[very thick,dotted,red,>=stealth,->] (3/4*7+4/4,1) -- (m1);
    \draw[very thick,dotted,red] (L) -- (3/4*7+4/4,1);
    \draw[very thick,dotted,red,>=stealth,->] (4/5*7+8/5,1) -- (m2);
    \draw[very thick,dotted,red] (L) -- (4/5*7+8/5,1);
    \draw[very thick,dotted,red,>=stealth,->] (5/6*7+10/6,1) -- (m3);
    \draw[very thick,dotted,red] (L) -- (5/6*7+10/6,1);
    \draw[>=stealth,->] (-1,2) -- (1.5,2) node[midway,above]{$x$};
    \draw[>=stealth,->] (-1,2) -- (-1,4.5) node[midway,right]{$z$};
  \end{tikzpicture}
  \caption{The microphones $\ve m_i$ think they are hearing echoes
    from the wall $W_{\operatorname{ghost}}$. The dotted lines stand
    for sound rays, $\ve L$ for the loudspeaker.}
  \label{1fBad}
\end{figure}
shows three microphones in a plane at positions~$\ve m_1$, $\ve m_2$,
$\ve m_3$ that hear echoes from three walls~$W_i$, but the time
elapsed between sound emission and echo detection is the same as if
they were hearing echoes from one single wall
$W_{\operatorname{ghost}}$, which does not exist. This arises because
(1) the walls $W_i$ are all horizontal, (2) the distances
between $\ve m_i$ and $W_i$ are the same for all~$i$, and (3)
each~$\ve m_i$ can hear the echo from $W_i$. It is easy to add a
fourth microphone outside of the drawing plane, together with a wall
possibly also outside of the plane, such that~(1)--(3) extend to the
fourth microphone and wall. (We find it harder to include that in our
two-dimensional sketch in \cref{1fBad}.) Since the echoes heard by the
microphones behave precisely like echoes from the single wall
$W_{\operatorname{ghost}}$, any matching procedure will falsely assume
that they have, in fact, come from $W_{\operatorname{ghost}}$, giving
rise to a ghost wall.
% In fact, it is impossible to distinguish the echoes from the $W_i$ and
% from $W_{\operatorname{pahntom}}$ by their arrival times.
The situation is particularly severe since the ghost wall has some
persistence properties. In fact, the conditions~(1)--(3), and
therefore the emergence of a ghost wall, will be preserved if
\begin{enumerate}[label=(\alph*)]
\item \label{PerA} the microphones are moved, independently and within a certain
  range, in directions parallel to the walls,
\item \label{PerB} the microphones are moved, {\em together} and within a certain
  range, along the $z$-axis, or, less importantly,
\item \label{PerC} the loudspeaker is moved, within a certain range, in any
  direction.
\end{enumerate}

Our sketch does not contain any vehicle that carries the
microphones. Let us now imagine there is such a vehicle, and that it
is {\em ground-based}, in the sense that its movement (rotation and
translation) is restricted to the $x$-$y$-plane, where the $y$-axis is
perpendicular to the drawing plane. The vehicle may also be allowed to
move up or down along the $z$-axis, like a hovering drone. Then the
persistence properties imply that within a certain range, vehicle
movements will not make the ghost wall disappear. For this it does not
matter whether the loudspeaker is mounted on the vehicle or at a fixed
position. In all cases, there is a region of positive volume within
the space of all vehicle movements, in which the vehicle remains in a
bad position. So it is not true that almost all vehicle positions are good. Thus
here we hit a limitation to what can possibly be shown for microphones
mounted on ground-based vehicles, or vehicles with an additional
degree of freedom of moving up or down. (In comparison, a drone has
two additional degrees of freedom: pitch and roll.)

% For later reference, we find it useful to introduce a catchphrase for
% the situation of \cref{1fBad}. Since the walls $W_i$ have, up to a
% shift, the same $z$-coordinates as the microphone
% positions~$\ve m_i$, and since they also need to be in some way
% horizontally aligned with the microphones in order to achieve
% Condition~(3), we will say that the microphones are ``{\em deceived by
%   their own images}.''

% Deception by the microphones' own images can only happen for very
% special wall arrangements. But can it be completely be avoided?  There
% is one way: to position the microphones and the loudspeaker on the
% vehicle in such a way that two microphones and the loudspeaker are
% aligned along the $z$-axis. Then not all the echoes from walls as in
% \cref{1fBad} can be heard: indeed, if the walls corresponding to the
% two aligned microphones are on the same side of the vehicle, they
% cannot both reflect the sound because of the reasonable assumption
% that sound cannot travel through walls; and if they are on opposite
% sides of the vehicle, the vehicle cannot squeeze between them for
% mechanical reasons. So in such a configuration, the microphones cannot
% hear their own images, and therefore cannot be deceived by them.

The following closer analysis will reveal a more general situation where it is not true that almost all positions are good. Since we are using ray acoustics, an echo reflected at a wall arrives
with the same time delay as if it were emitted at what we call the
\df{mirror point}, which is the reflection of the loudspeaker position
at the wall or, more precisely, at the plane containing the wall
surface. \cref{1fMirror} shows the mirror points~$\ve s_i$ in the situation of
\cref{1fBad}.

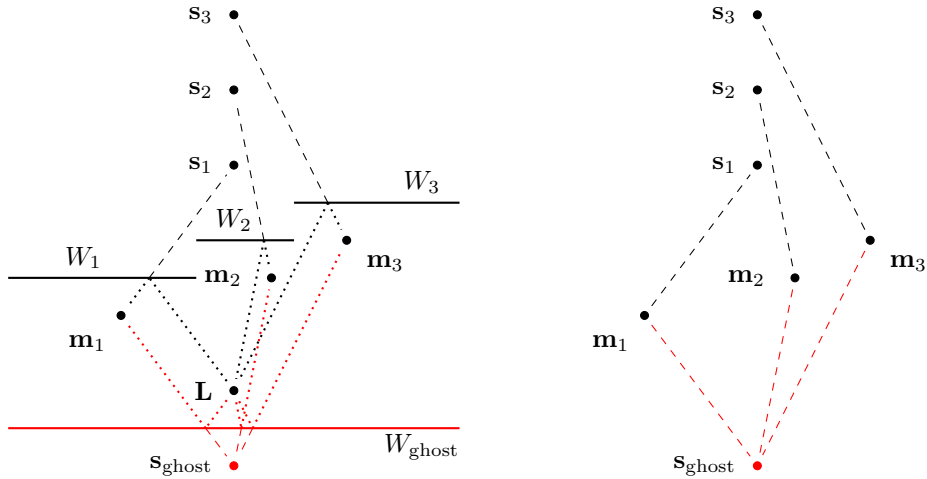
\begin{figure}[htbp]
  \centering
  \begin{tikzpicture}[scale=0.5]
    % \fill[pattern=north west lines,pattern color=red] (1,1) --
    % (13,1) -- (13,0.7) -- (1,0.7) -- (1,1);
    \draw[red,thick] (1,1) -- (13,1);
    \draw (12,1.1) node[below]{$W_{\operatorname{ghost}}$};
    \vertex (L) at (7,2)  [label=left:$\ve L$] {};
    \vertex (s1) at (7,8) [label=left:$\ve s_1$] {};
    \vertex (s2) at (7,10) [label=left:$\ve s_2$] {};
    \vertex (s3) at (7,12) [label=left:$\ve s_3$] {};
    \vertex (sg) at (7,0) [color=red,label=left:$\ve s_{\operatorname{ghost}}$] {};
    \vertex (m1) at (4,4) [label=-120:$\ve m_1$] {};
    \vertex (m2) at (8,5) [label=left:$\ve m_2\ $] {};
    \vertex (m3) at (10,6) [label=-30:$\ve m_3$] {};
    % \fill[pattern=north west lines] (1,5) -- (6,5) -- (6,6) --
    % (8.6,6) -- (8.6,7) -- (13,7) -- (13,7.3) -- (8.3,7.3) --
    % (8.3,6.3) -- (5.7,6.3) -- (5.7,5.3) -- (1,5.3) -- (1,5);
    \draw[thick] (1,5) -- (6,5);
    \draw (3,5) node[above]{$W_1$};
    %\draw[thick] (6,5) -- (6,6);
    \draw[thick] (6,6) -- (8.6,6);
    \draw (7,6) node[above]{$W_2$};
    %\draw[thick] (8.6,6) -- (8.6,7);
    \draw[thick] (8.6,7) -- (13,7);
    \draw (12,7) node[above]{$W_3$};
    % \draw[very thin] (1,5) -- (13,5);
    % \draw[very thin] (1,6) -- (13,6);
    % \draw[very thin] (1,7) -- (13,7);
    \draw[thick,dotted] (3/4*4+7/4,5) -- (m1);
    \draw[dashed] (3/4*4+7/4,5) -- (s1);
    \draw[thick,dotted] (L) -- (3/4*4+7/4,5);
    \draw[thick,dotted] (4/5*8+1/5*7,6) -- (m2);
    \draw[dashed] (4/5*8+1/5*7,6) -- (s2);
    \draw[thick,dotted] (4/5*8+1/5*7,6) -- (L);
    \draw[thick,dotted] (5/6*10+7/6,7) -- (m3);
    \draw[dashed] (5/6*10+7/6,7) -- (s3);
    \draw[thick,dotted] (5/6*10+7/6,7) -- (L);
    \draw[thick,dotted,red] (3/4*7+4/4,1) -- (m1);
    \draw[dashed,red] (3/4*7+4/4,1) -- (sg);
    \draw[thick,dotted,red] (L) -- (3/4*7+4/4,1);
    \draw[thick,dotted,red] (4/5*7+8/5,1) -- (m2);
    \draw[dashed,red] (4/5*7+8/5,1) -- (sg);
    \draw[thick,dotted,red] (L) -- (4/5*7+8/5,1);
    \draw[thick,dotted,red] (5/6*7+10/6,1) -- (m3);
    \draw[dashed,red] (5/6*7+10/6,1) -- (sg);
    \draw[thick,dotted,red] (L) -- (5/6*7+10/6,1);
    % \draw[>=stealth,->] (-1,2) -- (1.5,2) node[midway,above]{$x$};
    % \draw[>=stealth,->] (-1,2) -- (-1,4.5) node[midway,right]{$z$};
  \end{tikzpicture} \qquad \qquad
  \begin{tikzpicture}[scale=0.5]
    % \fill[pattern=north west lines,pattern color=red] (1,1) --
    % (13,1) -- (13,0.7) -- (1,0.7) -- (1,1);
    %\draw[red,thick] (1,1) -- (13,1);
    %\draw (12,1.1) node[below]{$W_{\operatorname{ghost}}$};
    %\vertex (L) at (7,2)  [label=left:$\ve L$] {};
    \vertex (s1) at (7,8) [label=left:$\ve s_1$] {};
    \vertex (s2) at (7,10) [label=left:$\ve s_2$] {};
    \vertex (s3) at (7,12) [label=left:$\ve s_3$] {};
    \vertex (sg) at (7,0) [color=red,label=left:$\ve s_{\operatorname{ghost}}$] {};
    \vertex (m1) at (4,4) [label=-120:$\ve m_1$] {};
    \vertex (m2) at (8,5) [label=left:$\ve m_2\ $] {};
    \vertex (m3) at (10,6) [label=-30:$\ve m_3$] {};
    % \fill[pattern=north west lines] (1,5) -- (6,5) -- (6,6) --
    % (8.6,6) -- (8.6,7) -- (13,7) -- (13,7.3) -- (8.3,7.3) --
    % (8.3,6.3) -- (5.7,6.3) -- (5.7,5.3) -- (1,5.3) -- (1,5);
    %\draw[thick] (1,5) -- (6,5);
    %\draw (3,5) node[above]{$W_1$};
    %\draw[thick] (6,5) -- (6,6);
    %\draw[thick] (6,6) -- (8.6,6);
    %\draw (7,6) node[above]{$W_2$};
    %\draw[thick] (8.6,6) -- (8.6,7);
    %\draw[thick] (8.6,7) -- (13,7);
    %\draw (12,7) node[above]{$W_3$};
    % \draw[very thin] (1,5) -- (13,5);
    % \draw[very thin] (1,6) -- (13,6);
    % \draw[very thin] (1,7) -- (13,7);
    %\draw[thick,dotted] (3/4*4+7/4,5) -- (m1);
    \draw[dashed] (m1) -- (s1);
    %\draw[thick,dotted] (L) -- (3/4*4+7/4,5);
    %\draw[thick,dotted] (4/5*8+1/5*7,6) -- (m2);
    \draw[dashed] (m2) -- (s2);
    %\draw[thick,dotted] (4/5*8+1/5*7,6) -- (L);
    %\draw[thick,dotted] (5/6*10+7/6,7) -- (m3);
    \draw[dashed] (m3) -- (s3);
    %\draw[thick,dotted] (5/6*10+7/6,7) -- (L);
    %\draw[thick,dotted,red] (3/4*7+4/4,1) -- (m1);
    \draw[dashed,red] (m1) -- (sg);
    %\draw[thick,dotted,red] (L) -- (3/4*7+4/4,1);
    %\draw[thick,dotted,red] (4/5*7+8/5,1) -- (m2);
    \draw[dashed,red] (m2) -- (sg);
    %\draw[thick,dotted,red] (L) -- (4/5*7+8/5,1);
    %\draw[thick,dotted,red] (5/6*7+10/6,1) -- (m3);
    \draw[dashed,red] (m3) -- (sg);
    %\draw[thick,dotted,red] (L) -- (5/6*7+10/6,1);
    % \draw[>=stealth,->] (-1,2) -- (1.5,2) node[midway,above]{$x$};
    % \draw[>=stealth,->] (-1,2) -- (-1,4.5) node[midway,right]{$z$};
  \end{tikzpicture}
  \caption{Virtually, the sound comes from the mirror points
    $\ve s_i$. So the loudspeaker and walls might as well be left out
    of the sketch.}
  \label{1fMirror}
\end{figure}

\begin{figure}[htbp]
  \centering
  \begin{tikzpicture}[scale=0.5]
    % \fill[pattern=north west lines,pattern color=red] (1,1) --
    % (13,1) -- (13,0.7) -- (1,0.7) -- (1,1);
    \draw[red,thick] (6.5-0.5*3.5,1.75-0.5) -- (6.5+0.8*3.5,1.75+0.8);
    \draw (10,2.3) node[below]{$W_{\operatorname{ghost}}$};
    \vertex (L) at (6,3.5)  [label=left:$\ve L$] {};
    \vertex (s1) at (7,8) [label=left:$\ve s_1$] {};
    \vertex (s2) at (7,10) [label=left:$\ve s_2$] {};
    \vertex (s3) at (7,12) [label=left:$\ve s_3$] {};
    \vertex (sg) at (7,0) [color=red,label=left:$\ve s_{\operatorname{ghost}}$] {};
    \vertex (m1) at (4,4) [label=-120:$\ve m_1$] {};
    \vertex (m2) at (8,5) [label=right:$\ve m_2\ $] {};
    \vertex (m3) at (10,6) [label=-30:$\ve m_3$] {};
    % \fill[pattern=north west lines] (1,5) -- (6,5) -- (6,6) --
    % (8.6,6) -- (8.6,7) -- (13,7) -- (13,7.3) -- (8.3,7.3) --
    % (8.3,6.3) -- (5.7,6.3) -- (5.7,5.3) -- (1,5.3) -- (1,5);
    \draw[thick] (6.5-0.5*4.5,5.75+0.5) -- (6.5-0.1*4.5,5.75+0.1*1);
    \draw (3.9,5.2) node[above]{$W_1$};
    %\draw[thick] (6,5) -- (6,6);
    \draw[thick] (6.5-0*6.5,6.75) -- (6.5+0.25*6.5,6.75-0.25);
    \draw (6.8,5.7) node[above]{$W_2$};
    %\draw[thick] (8.6,6) -- (8.6,7);
    \draw[thick] (6.5+0.25*8.5,7.75-0.25) -- (6.5+0.5*8.5,7.75-0.5);
    \draw (11,7.1) node[above]{$W_3$};
    % \draw[very thin] (1,5) -- (13,5);
    % \draw[very thin] (1,6) -- (13,6);
    % \draw[very thin] (1,7) -- (13,7);
    \draw[thick,dotted] (4+0.494*3,4+0.494*4) -- (m1);
    \draw[dashed] (4+0.494*3,4+0.494*4) -- (s1);
    \draw[thick,dotted] (L) -- (4+0.494*3,4+0.494*4);
    \draw[thick,dotted] (8-0.3135,5+0.3135*5) -- (m2);
    \draw[dashed] (8-0.3135,5+0.3135*5) -- (s2);
    \draw[thick,dotted] (8-0.3135,5+0.3135*5) -- (L);
    \draw[thick,dotted] (10-0.237*3,6+6*0.237) -- (m3);
    \draw[dashed] (10-0.237*3,6+6*0.237) -- (s3);
    \draw[thick,dotted] (10-0.237*3,6+6*0.237) -- (L);
    \draw[thick,dotted,red] (4+3*0.61,4-4*0.61) -- (m1);
    \draw[dashed,red] (4+3*0.61,4-4*0.61) -- (sg);
    \draw[thick,dotted,red] (L) -- (4+3*0.61,4-4*0.61);
    \draw[thick,dotted,red] (8-0.6,5-5*0.6) -- (m2);
    \draw[dashed,red] (8-0.6,5-5*0.6) -- (sg);
    \draw[thick,dotted,red] (L) -- (8-0.6,5-5*0.6);
    \draw[thick,dotted,red] (10-3*0.632,6-6*0.632) -- (m3);
    \draw[dashed,red] (10-3*0.632,6-6*0.632) -- (sg);
    \draw[thick,dotted,red] (L) -- (10-3*0.632,6-6*0.632);
    % \draw[>=stealth,->] (-1,2) -- (1.5,2) node[midway,above]{$x$};
    % \draw[>=stealth,->] (-1,2) -- (-1,4.5) node[midway,right]{$z$};
  \end{tikzpicture} \qquad \qquad
  \begin{tikzpicture}[scale=0.5]
    % \fill[pattern=north west lines,pattern color=red] (1,1) --
    % (13,1) -- (13,0.7) -- (1,0.7) -- (1,1);
    %\draw[red,thick] (1,1) -- (13,1);
    %\draw (12,1.1) node[below]{$W_{\operatorname{ghost}}$};
    %\vertex (L) at (7,2)  [label=left:$\ve L$] {};
    \vertex (s1) at (7,8) [label=left:$\ve s_1$] {};
    \vertex (s2) at (7,10) [label=left:$\ve s_2$] {};
    \vertex (s3) at (7,12) [label=left:$\ve s_3$] {};
    \vertex (sg) at (7,0) [color=red,label=left:$\ve s_{\operatorname{ghost}}$] {};
    \vertex (m1) at (4,4) [label=-120:$\ve m_1$] {};
    \vertex (m2) at (8,5) [label=left:$\ve m_2\ $] {};
    \vertex (m3) at (10,6) [label=-30:$\ve m_3$] {};
    % \fill[pattern=north west lines] (1,5) -- (6,5) -- (6,6) --
    % (8.6,6) -- (8.6,7) -- (13,7) -- (13,7.3) -- (8.3,7.3) --
    % (8.3,6.3) -- (5.7,6.3) -- (5.7,5.3) -- (1,5.3) -- (1,5);
    %\draw[thick] (1,5) -- (6,5);
    %\draw (3,5) node[above]{$W_1$};
    %\draw[thick] (6,5) -- (6,6);
    %\draw[thick] (6,6) -- (8.6,6);
    %\draw (7,6) node[above]{$W_2$};
    %\draw[thick] (8.6,6) -- (8.6,7);
    %\draw[thick] (8.6,7) -- (13,7);
    %\draw (12,7) node[above]{$W_3$};
    % \draw[very thin] (1,5) -- (13,5);
    % \draw[very thin] (1,6) -- (13,6);
    % \draw[very thin] (1,7) -- (13,7);
    %\draw[thick,dotted] (3/4*4+7/4,5) -- (m1);
    \draw[dashed] (m1) -- (s1);
    %\draw[thick,dotted] (L) -- (3/4*4+7/4,5);
    %\draw[thick,dotted] (4/5*8+1/5*7,6) -- (m2);
    \draw[dashed] (m2) -- (s2);
    %\draw[thick,dotted] (4/5*8+1/5*7,6) -- (L);
    %\draw[thick,dotted] (5/6*10+7/6,7) -- (m3);
    \draw[dashed] (m3) -- (s3);
    %\draw[thick,dotted] (5/6*10+7/6,7) -- (L);
    %\draw[thick,dotted,red] (3/4*7+4/4,1) -- (m1);
    \draw[dashed,red] (m1) -- (sg);
    %\draw[thick,dotted,red] (L) -- (3/4*7+4/4,1);
    %\draw[thick,dotted,red] (4/5*7+8/5,1) -- (m2);
    \draw[dashed,red] (m2) -- (sg);
    %\draw[thick,dotted,red] (L) -- (4/5*7+8/5,1);
    %\draw[thick,dotted,red] (5/6*7+10/6,1) -- (m3);
    \draw[dashed,red] (m3) -- (sg);
    %\draw[thick,dotted,red] (L) -- (5/6*7+10/6,1);
    % \draw[>=stealth,->] (-1,2) -- (1.5,2) node[midway,above]{$x$};
    % \draw[>=stealth,->] (-1,2) -- (-1,4.5) node[midway,right]{$z$};
  \end{tikzpicture}
  \caption{A different loudspeaker position and different walls produce the exact same unlucky stack of mirror points as in \cref{1fMirror}. But here the walls are not horizontal.}
  \label{1fMirror2}
\end{figure}
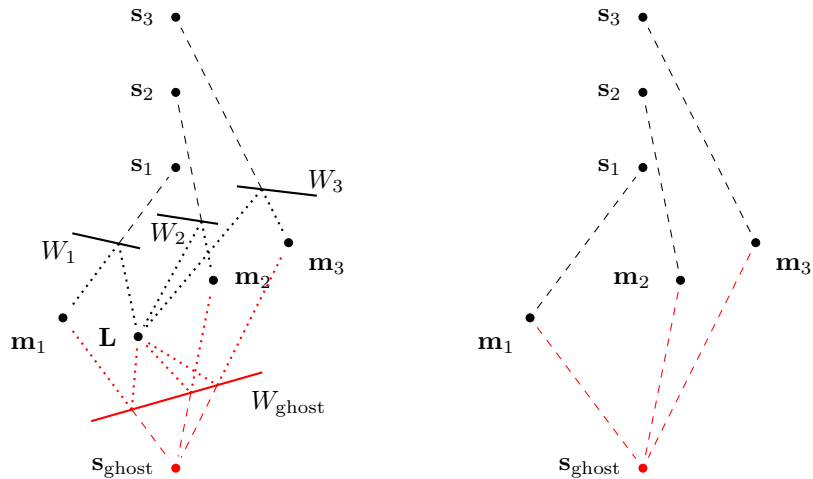

If we have mirror points as in the right sketch of \cref{1fMirror}, then we have the persistence
properties~\ref{PerA} and~\ref{PerB} of the ghost wall discussed above. For this, it is not
necessary that the loudspeaker be situated on the same vertical line
as the mirror points, so the walls may be non-horizontal. \cref{1fMirror2} illustrates that. In fact, it
is only if the loudspeaker is moving as well that we need horizontal
walls to achieve the persistence properties of the ghost wall. It is useful to introduce a catchphrase for the situation shown in \cref{1fMirror,1fMirror2}, which is done in the following definition. From now on, we will only consider the case in which the loudspeaker is at a fixed position and not moving with the vehicle.

\begin{defi} \label{Stack}%
  Assume that four microphones are mounted on a vehicle that can move
  within the $x$-$y$-plane, and possibly up and down along the
  $z$-axis. Assume that the vehicle moves in a scene that contains
  several flat walls and a loudspeaker at a fixed position. We say that the microphones are
  \df{deceived by an unlucky stack of mirror points} if among the walls there are four whose
  mirror points $\ve s_1 \upto \ve s_4$ are contained in a common
  vertical line, such that for $i,j \in \{1 \upto 4\}$, the $z$-coordinate of
  the vector $\ve s_j - \ve s_i$ is twice the $z$-coordinate of
  $\ve m_j - \ve m_i$. In particular, this means that if for some $i \ne j$
  the positions~$\ve m_i$ and~$\ve m_j$ share the same $z$-coordinate, then $\ve s_i = \ve s_j$; so for deception by an unlucky stack of mirror points, the mirror points (and the walls) need not be pairwise distinct.
\end{defi}

Assume that the microphones are deceived by an unlucky stack of mirror points $\ve s_1 \upto \ve s_4$. Then the reflection of~$\ve s_i$ at the horizontal plane containing~$\ve m_i$ produces a point~$\ve s_{\operatorname{ghost}}$ that is the same for all~$i$. Therefore the distance between~$\ve m_i$ and~$\ve s_i$ equals the distance between~$\ve m_i$ and~$\ve s_{\operatorname{ghost}}$. So if the vehicle is positioned in such a way that the $i$th microphone can hear the echo of the $i$th wall, then a ghost wall, corresponding to~$\ve s_{\operatorname{ghost}}$, will be detected. This justifies our speaking of ``deception.''

\section{A ground based vehicle and a hovering drone} \label{sMain}

Before stating our main result it is useful to recall how the wall detection
algorithm from~[\citenumber{Boutin:Kemper:2019}] works. The idea is to
use a relation satisfied by the distances travelled by the sound that
is reflected at the same wall, but received by different
microphones. This relation between the squared distances
$d_1 \upto d_4$ is
\begin{equation} \label{1eqRel}%
  f_D(d_1 \upto d_4) = 0,
\end{equation}
where for real numbers $u_1 \upto u_4 \in \RR$, and with $D_{i,j}$ the
squared distance between the $i$th and the $j$th microphone, the
polynomial $f_D$ is defined as the Cayley-Menger determinant
\begin{equation} \label{1eqF}%
  f_D(u_1 \upto u_4) := \det
  \begin{pmatrix}
    0 & u_1 & \cdots & u_4 & 1 \\
    u_1 & D_{1,1} & \cdots & D_{1,4} & 1 \\
    \vdots & \vdots & & \vdots & \vdots \\
    u_4 & D_{4,1} & \cdots & D_{4,4} & 1 \\
    1 & 1 & \cdots & 1 & 0
  \end{pmatrix}.
\end{equation}
With this, we can state the wall detection algorithm as
\cref{1aDetect}.

\begin{algorithm}%[Detect walls from first-order echoes] \label{1aDetect}
  \caption{Detect walls from first-order echoes} \label{1aDetect}
  \mbox{}%
  \begin{description}
  \item[Input] The delay times of the first-order echoes recorded by
    four microphones, and the distances $D_{i,j}$ between the
    microphones.
  \end{description}
  \begin{algorithmic}[1]
    \STATE \label{1aDetect1} For $i = 1 \upto 4$, collect the recorded
    times of the first-order echoes recorded by the $i$th microphone
    in the set $\mathcal T_i$.%
    \STATE \label{1aDetect2} Set
    $\mathcal D_i := \{c^2 (t - t_0)^2 \mid t \in \mathcal T_i\}$
    ($i = 1 \upto 4$), where~$c$ is the speed of sound and~$t_0$ is
    the time of sound emission.%
    \FOR{$(d_1,d_2,d_3,d_4) \in \mathcal D_1 \times \mathcal D_2
      \times \mathcal D_3 \times \mathcal D_4$} \label{1aDetect3}%
    \STATE \label{1aDetect4} With~$f_D$ defined by~\cref{1eqF},
    evaluate $f_D(d_1 \upto d_4)$.%
    \IF{$f_D(d_1 \upto d_4) = 0$}%
    \STATE \label{1aDetect6} Use the geometric methods such as those
    given in~[\citenumber{Boutin:Kemper:2019}, Proposition~1.1] to
    compute the parameters describing the wall corresponding to the
    distances $d_1 \upto d_4$.%
    \STATE \label{1aDetect7} {\bf Output} the parameters of this
    wall.%
    \ENDIF%
    \ENDFOR%
  \end{algorithmic}
\end{algorithm}

There is no need here to be specific about what parameters are
computed to describe a wall detected by the algorithm. \cref{1aDetect}
is guaranteed to detect every wall from which a first order echo is
heard by all four microphones. However, it may happen that the
relation $f_D(d_1 \upto d_4) = 0$ is satisfied by accident even though
the squared distances~$d_i$ come from sound reflections at different
walls. This can deceive the algorithm into outputting walls that do
not actually exist. As in~[\citenumber{Boutin:Kemper:2019}], we call
such walls {\em ghost walls} and we say that the vehicle carrying the
microphones (and possibly the loudspeaker) is in a {\em bad position}
if at least one ghost wall is detected. One instance where this
happens is when the microphones are deceived by an unlucky stack of mirror points, as
discussed in the previous subsection.

\begin{theorem}[A ground-based vehicle in a scene with a fixed loudspeaker] \label{2tMain}%
  Assume that four microphones are mounted on a ground-based vehicle,
  which can move on the ground plane within a three-dimensional scene containing~$n$ walls and
  a loudspeaker at fixed positions. Assume the microphones do not lie on a common plane. If the microphones are not deceived by an unlucky stack of mirror points, according to \cref{Stack}, then almost all vehicle positions are good.

  Moreover, if~$l \in \{2,3,4\}$ is the number of distinct
  $z$-coordinates of the four microphone positions, then within
  the
  $(3n)$-dimensional configuration space of all wall arrangements, the
  ones where an unlucky stack of mirror points occurs are contained in a subvariety of codimension~$3 (l-1)$.
  % Assume that four microphones are mounted on a ground-based vehicle,
  % which can move in a three-dimensional scene containing~$n$ walls and
  % a loudspeaker at fixed positions. Assume the microphones do not lie
  % on a common plane. If a ghost wall occurs with probability $> 0$,
  % then this is because the microphones are deceived by their own
  % images, according to \cref{Stack}.

  % In fact, if~$l \in \{2,3,4\}$ is the number of distinct
  % $z$-coordinates of the four microphone positions, then within the
  % $(3n)$-dimensional configuration space of all wall arrangements, the
  % bad ones are contained in a subvariety of codimension~$3 (l-1)$.
\end{theorem}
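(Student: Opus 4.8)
\textbf{First statement.}
Fix the $n$ walls and the loudspeaker, write $\ve s(w)\in\RR^3$ for the mirror point of a wall~$w$, and let~$G$ be the configuration space of the ground-based vehicle: a $3$-dimensional, irreducible real algebraic variety, parametrized by $(c,s,v_x,v_y)$ with $c^2+s^2=1$, so that the world position $\ve m_i(g)$ of the $i$th microphone is polynomial in these coordinates, its height~$h_i$ is constant along~$G$, and the squared intermicrophone distances $D_{i,j}$ are constants. A ghost wall at~$g$ can occur only for a quadruple $(w_1,w_2,w_3,w_4)$ of walls that are not all equal and for which $f_D\bigl(\|\ve m_1(g)-\ve s(w_1)\|^2\upto\|\ve m_4(g)-\ve s(w_4)\|^2\bigr)=0$; hence the bad positions are contained in the union, over the finitely many such quadruples, of the zero sets $V_{(w_i)}\subseteq G$ of these polynomial-in-$g$ functions. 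Each $V_{(w_i)}$ is Zariski-closed in the irreducible~$G$, so it is either all of~$G$ or a proper subvariety, hence of measure zero; a finite union of proper subvarieties is again a proper subvariety of lower dimension. So everything reduces to the implication: if $f_D(\dots)$ vanishes identically on~$G$ for some quadruple that is not all equal, then the microphones are deceived by an unlucky stack of mirror points built from $w_1\upto w_4$.

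To prove this, expand $f_D(\dots)$ as a polynomial in $c,s,v_x,v_y$ and set each of its coefficients to zero; this gives a polynomial system in the coordinates of the mirror points $\ve s(w_i)$ and of the microphones in the vehicle frame. Geometrically, $f_D=0$ says there is a ``ghost mirror point'' $\ve x(g)$ with $\|\ve x(g)-\ve m_i(g)\|=\|\ve s(w_i)-\ve m_i(g)\|$ for all~$i$; since the microphones are not coplanar, $\ve x(g)$ is uniquely determined, and letting the horizontal translation vary heuristically pushes toward the conclusion that the $\ve s(w_i)$ lie on a common vertical line with $z$-spacing $z(\ve s(w_j))-z(\ve s(w_i))=2(h_j-h_i)$, i.e.\ an unlucky stack. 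Making this rigorous in all cases --- several of the $w_i$ coinciding, $\ve x(g)$ not visibly constant, and microphone configurations degenerate for reasons other than coplanarity --- is done by computing a Gröbner basis of the coefficient ideal (saturated by the non-coplanarity condition) and reading off its components, organized by the number $l\in\{2,3,4\}$ of distinct heights~$h_i$; the outcome should be precisely the unlucky-stack locus, and the reverse implication (unlucky stack $\Rightarrow$ $f_D\equiv 0$ on all of~$G$) is the elementary reflection argument recorded after \cref{Stack}. The \textbf{main obstacle} is the size of this computation: compared with the six-degrees-of-freedom situation of~[\citenumber{Boutin:Kemper:2019}], restricting to three degrees of freedom introduces several extra variables, which forces the careful case split and also explains why the vehicle-mounted-loudspeaker version is computationally out of reach.

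\textbf{Second statement.}
The space of planes in $\RR^3$ is $3$-dimensional, and assigning to a wall the reflection of the fixed loudspeaker at it is a bijection onto~$\RR^3$ away from the $2$-dimensional family of planes through the loudspeaker; thus, up to a lower-dimensional locus, the $(3n)$-dimensional configuration space of $n$-wall arrangements is the space of $n$-tuples of mirror points. If the microphones are deceived by an unlucky stack, then by \cref{Stack} the number of distinct mirror points among $\ve s_1\upto\ve s_4$ equals the number~$l$ of distinct heights~$h_i$, so the deception involves $l$ distinct walls whose mirror points $\ve t_1\upto\ve t_l$ satisfy: each of $\ve t_2\upto\ve t_l$ shares the two horizontal coordinates of~$\ve t_1$, and the height of each~$\ve t_k$ is a prescribed affine function of the height of~$\ve t_1$ --- that is, $3(l-1)$ independent conditions --- while the remaining $n-l$ walls stay free. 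Expressing these conditions in the wall parameters and taking the finite union over all choices of the~$l$ walls and of their assignment to the microphone groups, the unlucky-stack locus is contained in a subvariety of codimension $3(l-1)$, as claimed.
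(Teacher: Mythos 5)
Your treatment of the second statement (the codimension bound) is fine and essentially the paper's own argument: identifying walls with mirror points, an unlucky stack imposes $3(l-1)$ independent affine conditions on the $l$ relevant mirror points, and one takes the finite union over the choices of walls.

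The first statement, however, has a genuine gap. You reduce everything to the implication: if $f_D\bigl(\lVert\ve m_1(g)-\ve s(w_1)\rVert^2\upto\lVert\ve m_4(g)-\ve s(w_4)\rVert^2\bigr)$ vanishes identically on the configuration group for a quadruple of walls that are not all equal, then the mirror points form an unlucky stack. That implication is false, and the paper exhibits a counterexample in \cref{1fReallyGood}: two distinct walls $W_1\ne W_2$ arranged so that $\ve m_1$ stays on their bisector for every vehicle position, whence the quadruple $(W_2,W_1,W_1,W_1)$ makes $f_D$ vanish identically on all of $\ASO(2)$ although no unlucky stack is present (the two mirror points are not on a common vertical line). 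The correct conclusion of identical vanishing, absent an unlucky stack, is a disjunction: there is an index $i$ with $\lVert\ve s_j-\phi(\ve m_j^\ini)\rVert=\lVert\ve s_i-\phi(\ve m_j^\ini)\rVert$ for all $j$, i.e.\ each microphone hears the echoes of $W_j$ and of $W_i$ simultaneously (this is the paper's \cref{2Claim2} and the notion of a ``really good'' position). Two consequences for your plan. First, you then need a separate argument (the paper's \cref{2Claim1}) that in this simultaneous-arrival case the algorithm still outputs a real wall, namely $W_i$; this is where the non-coplanarity of the microphones is actually used, to show the reconstructed mirror point must equal $\re_{W_i}(\ve L)$. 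Second, your proposed computation --- saturate the coefficient ideal by the non-coplanarity determinant and read off the unlucky-stack locus --- cannot succeed as stated: the resulting variety necessarily contains all the simultaneous-arrival components as well, so one must also saturate by (or multiply in) the ideals $D_i$ generated by the coefficients of $\tilde F_{i,j}-\tilde F_{j,j}$, as in the paper's \cref{2Claim3}. A smaller but real omission is that the really-good positions do not form a Zariski-open set, so the measure-zero bookkeeping must be organized around the sets $\mathcal U_{W_1\upto W_4}$ rather than by discarding quadruples with identically vanishing $f_D$; and some normalization of the $\ve s_i$ and $\ve m_i^\ini$ by a group normalizing $\ASO(2)$ is needed to make the Gr\"obner computation feasible. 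Your overall architecture (irreducibility of the configuration group, reduction to an identical-vanishing statement, computer verification) is the right one, but the target of the reduction must be weakened from ``unlucky stack'' to ``unlucky stack or simultaneous arrivals,'' and the harmlessness of the latter proved separately.
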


Before giving the proof, we make a remark that should explain why some additional difficulties arise compared to our proofs in~[\citenumber{Boutin:Kemper:2019}]. In~[\citenumber{Boutin:Kemper:2019}] we used the notion of a
``very good position,'' which we briefly recall now. Let $\mathcal W$
be the (finite) set of walls in our room. The mirror points are given
by $\ve s = \re_W(\ve L)$, the reflection of the loudspeaker position
at a wall $W \in \mathcal W$.  We call a vehicle position \df{very
  good} if the following holds: for four walls
$W_1 \upto W_4 \in \mathcal W$ the relation
\[
  f_D\bigl(\lVert\re_{W_1}(\ve L) - \ve m_1\rVert^2 \upto
  \lVert\re_{W_4}(\ve L) - \ve m_4\rVert^2\bigr) = 0
\]
is satisfied only if $W_1 = W_2 = W_3 = W_4$. (Of course, the above
expression depends on the vehicle position because the~$\ve m_i$ do.) It is easy to see, and formally proved
in~[\citenumber{Boutin:Kemper:2019}], that a very good position is
good. The utility of this concept lies in the fact that the very good
positions form a Zariski-open set, so for proving ``allmost all''
statements it suffices to show that for every wall arrangement there
exists at least one very good position. This turned out to be possible
in the cases considered in~[\citenumber{Boutin:Kemper:2019}], where
the microphones are mounted on a drone with six degrees of
freedom. However, for microphones on a ground-based vehicle, it may
happen that no very good position exists, but nevertheless all
positions are good. \cref{1fReallyGood} shows a wall arrangement and
microphone configuration (again in dimensions two) where this happens.

\begin{figure}[htbp]
  \centering
  \begin{tikzpicture}[scale=1.0]
    \fill[pattern=north west lines] (-1,-3) -- (4,0) -- (-1,3) --
    (-0.9,3.15) -- (4.3,0) -- (-0.9,-3.15) -- (-1,-3);
    \draw[thick] (-1,-3) -- (4,0);
    \draw (2,-1.5) node[below]{$W_1$};
    \draw[thick] (-1,3) -- (4,0);
    \draw (2,1.5) node[above]{$W_2$};
    \vertex (L) at (0,0) [label=0:$\ve L$] {};
    \vertex (m1) at (1,0) [color=red,label=0:$\ve m_1$] {};
    \vertex (m2) at (-1.5,1) [color=green,label=-180:$\ve m_2$] {};
    \vertex (m3) at (-1.5,-1) [color=blue,label=-180:$\ve m_3$] {};
    %\vertex (s1) at (12/17*3,-12/17*5) [color=green,label=0:$\ve s_1$] {};
    %\vertex (s2) at (12/17*3,12/17*5) [color=blue,label=0:$\ve s_2$] {};
    %\draw[dashed] (s1) -- (m1);
    \draw[red,very thick,dotted] (L) -- (-1+0.495*5,-3+0.495*3);
    \draw[red,very thick,dotted,>=stealth,->] (-1+0.495*5,-3+0.495*3) -- (m1);
    %\draw[dashed] (s1) -- (m2);
    \draw[green,very thick,dotted] (L) -- (-1+0.365*5,-3+0.365*3);
    \draw[green,very thick,dotted,>=stealth,->] (-1+0.365*5,-3+0.365*3) -- (m2);
    %\draw[dashed] (s1) -- (m3);
    \draw[blue,very thick,dotted] (L) -- (-1+0.255*5,-3+0.255*3);
    \draw[blue,very thick,dotted,>=stealth,->] (-1+0.255*5,-3+0.255*3) -- (m3);
    %\draw[dashed] (s2) -- (m1);
    \draw[red,very thick,dotted] (L) -- (-1+0.495*5,3-0.495*3);
    \draw[red,very thick,dotted,>=stealth,->] (-1+0.495*5,3-0.495*3) -- (m1);
    %\draw[dashed] (s2) -- (m2);
    \draw[green,very thick,dotted] (L) -- (-1+0.255*5,3-0.255*3);
    \draw[green,very thick,dotted,>=stealth,->] (-1+0.255*5,3-0.255*3) -- (m2);
    %\draw[dashed] (s2) -- (m3);
    \draw[blue,very thick,dotted] (L) -- (-1+0.365*5,3-0.365*3);
    \draw[blue,very thick,dotted,>=stealth,->] (-1+0.365*5,3-0.365*3) -- (m3);
    %\draw[very thick,dotted] (L) -- (-1+3/8*5,-2+3/8*3);
    %\draw[very thick,dotted,>=stealth,->] (-1+3/8*5,-2+3/8*3) -- (m3);
    % \draw[dashed] (s2) -- (m3);
    %\draw[very thick,dotted] (L) -- (-1+3/8*5,4+-3/8*3);
    %\draw[very thick,dotted,>=stealth,->] (-1+3/8*5,4+-3/8*3) -- (m3);
    \draw[>=stealth,->] (-5.5,-1.1) -- (-3.5,-1.1) node[midway,above]{$x$};
    \draw[>=stealth,->] (-5.5,-1.1) -- (-5.5,0.9) node[midway,right]{$z$};
  \end{tikzpicture}
  \caption{The echoes from both walls arrive at $\ve m_1$
    simultaneously. No ghost wall is detected.}
  \label{1fReallyGood}
\end{figure}
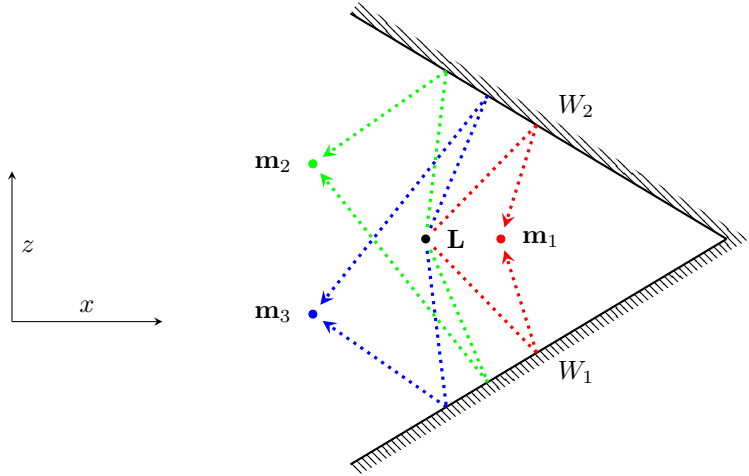

In fact, if the vehicle carrying the microphones and possibly also the
loudspeaker moves in the $x$-$y$-plane, $\ve m_1$ will
remain exactly in the middle between the walls, so $\ve m_1$ receives
the echoes from both walls simultaneously. This implies
\begin{multline*}
  f_D\bigl(\lVert\re_{W_2}(\ve L) - \ve
  m_1\rVert^2,\lVert\re_{W_1}(\ve L) - \ve
  m_2\rVert^2,\lVert\re_{W_1}(\ve L) - \ve
  m_3\rVert^2\bigr) = \\
  f_D\bigl(\lVert\re_{W_1}(\ve L) - \ve
  m_1\rVert^2,\lVert\re_{W_1}(\ve L) - \ve
  m_2\rVert^2,\lVert\re_{W_1}(\ve L) - \ve m_3\rVert^2\bigr) = 0,
\end{multline*}
since in the middle expression all echoes come from the same wall. But
since $W_1 \ne W_2$, this means that the vehicle is {\em not} in a
very good position. Nevertheless, the wall detection algorithm will
not detect a ghost wall in this situation, so the position is good. So
indeed \cref{1fReallyGood} provides an example where no very good
position exists for a ground-based vehicle, but all positions are
good.

We must be able to deal with such situations in our proof. To this
end, we will introduce the notion of a ``really good position,'' which
encapsulates the idea that if the position is not very good, then this
is because of simultaneous arrivals of echoes. The definition of this
notion will be given in the proof of \cref{2tMain}, where we will
also see that a really good position is good. (That very good implies
really good will be obvious.) The problem with really good positions
is that, unlike very good positions, they do not form a Zariski-open
set. So the proof has to take care of this difficulty.

% Our first result deals with the situation where the loudspeaker is at
% a fixed position in the scene. In the following, we will not
% distinguish between a wall and the plane in which it is contained. So
% an arrangement of~$n$ walls is given by the~$n$ mirror points, and
% therefore parametrized by a point in $\RR^{3 n}$. Given a vehicle with
% four microphones mounted on it in a non-coplanar configuration, we
% will say that the wall arrangement is \df{bad} if the bad vehicle
% positions are not confined to some lower-dimensional subvariety of the
% configuration space of all possible vehicle positions. So if the wall
% arrangement is good, ghost walls occur with probability~$0$.

% In this language, Theorem~4.1 of~[\citenumber{Boutin:Kemper:2019}]
% says that if the vehicle is a drone or can move like one (including
% pitch and roll movements), then all wall arrangements are good. We
% have already seen that the phenomenon of ``deception by an unlucky stack of mirror points'' precludes such a clean result for a ground-based vehicle. The
% following is therefore the best that can be expected.

\begin{proof}[Proof of \cref{2tMain}]
  Let us first prove the last statement about the codimension of the
  wall arrangements with unlucky stacks of mirror points. To have such a stack,~$l$ walls are needed to satisfy the restrictions of
  \cref{Stack}. These restrictions leave~$3$ degrees of
  freedom for the mirror points of these~$l$ walls, which gives an
  affine subspace of codimension $3 l - 3 = 3 (l-1)$. The wall
  arrangements with unlucky stacks are contained in the (finite) union of these subspaces
  associated to each choice of~$l$ walls, so we obtain the claimed codimension. Having thus shown the last
  statement, let us now turn to proving the main statement.
  
  A configuration of microphones on the vehicle is given by initial
  positions~$\ve m_i^\ini \in \RR^3$. For a matrix
  $A = \left(\begin{smallmatrix} a_{1,1} & a_{1,2} & a_{1,3} \\
      a_{2,1} & a_{2,2} & a_{2,3}\end{smallmatrix}\right) \in \RR^{2
    \times 3}$ such that
  $\left(\begin{smallmatrix} a_{1,1} & a_{1,2} \\ a_{2,1} &
      a_{2,2}\end{smallmatrix}\right) \in \RR^{2 \times 3}$ is
  orthogonal with determinant~$1$, consider the map
  \begin{equation} \label{eqPhi}%
    \mapl{\phi = \phi_A}{\RR^3}{\RR^3}{v}{
      \begin{pmatrix}
        a_{1,1} & a_{1,2} & 0 \\
        a_{2,1} & a_{2,2} & 0 \\
        0 & 0 & 1
      \end{pmatrix} \cdot v +
      \begin{pmatrix}
        a_{1,3} \\ a_{2,3} \\ 0
      \end{pmatrix}},
  \end{equation}
  and write $\ASO(2)$ for the group of all such maps. This group
  describes the possible vehicle positions. If a vehicle position is
  given by $\phi \in \ASO(2)$, then the microphone positions are
  $\phi(\ve m_i^\ini)$. The correspondence $A \leftrightarrow \phi_A$
  makes $\ASO(2)$ into an affine variety in $\RR^6$.

  Let $\mathcal W$ be the (finite) set of walls from our room. In this
  proof we identify the walls with the planes containing them. The
  mirror points are given by $\ve s = \re_W(\ve L)$, the reflection of
  the loudspeaker position at a wall $W \in \mathcal W$.  We call a
  group element $\phi \in \ASO(2)$ \df{really good} if the following
  holds: for four (not necessarily distinct) walls
  $W_1 \upto W_4 \in \mathcal W$ the relation
  \[
    f_D\bigl(\lVert\re_{W_1}(\ve L) - \phi(\ve m_1^\ini)\rVert^2 \upto
    \lVert\re_{W_4}(\ve L) - \phi(\ve m_4^\ini)\rVert^2\bigr) = 0
  \]
  is satisfied only if there is an $i \in \{1 \upto 4\}$ such that
  \[
    \Vert\re_{W_j}(\ve L) - \phi(\ve m_j^\ini)\rVert =
    \Vert\re_{W_i}(\ve L) - \phi(\ve m_j^\ini)\rVert \quad \text{for
      all} \quad j \in \{1 \upto 4\}.
  \]
  (The index~$i$ on the right hand side is not a typo, but the whole
  point of the condition.) It follows immediately that if~$\phi$ (or
  the vehicle position given by~$\phi$) is very good, then it is
  really good. More important is the following claim.
  \begin{claim} \label{2Claim1}%
    If $\phi \in \ASO(2)$ is really good, then the vehicle position
    given by~$\phi$ is good.
  \end{claim}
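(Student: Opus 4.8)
The plan is to show the contrapositive: if the vehicle position given by $\phi$ is \emph{not} good, then $\phi$ is not really good. Being in a bad position means \cref{1aDetect} detects a ghost wall, i.e.\ there is a tuple $(d_1 \upto d_4)$ of squared distances, with $d_i$ coming from an echo heard by the $i$th microphone at its position $\phi(\ve m_i^\ini)$, such that $f_D(d_1 \upto d_4) = 0$ but the corresponding wall parameters reconstructed in step~\ref{1aDetect6} describe a wall that is not actually present in $\mathcal W$. Each $d_i$ is of the form $\lVert\re_{W_i}(\ve L) - \phi(\ve m_i^\ini)\rVert^2$ for some genuine wall $W_i \in \mathcal W$ (the wall whose echo microphone~$i$ actually heard). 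So the relation in the definition of ``really good'' holds for these $W_1 \upto W_4$. Since $\phi$ is really good, there is an index $i$ with $\lVert\re_{W_j}(\ve L) - \phi(\ve m_j^\ini)\rVert = \lVert\re_{W_i}(\ve L) - \phi(\ve m_j^\ini)\rVert$ for all $j$.

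The key observation is then that this chain of equalities forces the reconstructed wall to coincide with $W_i$, hence to be a genuine wall and not a ghost. Concretely: the squared distances $d_j = \lVert\re_{W_j}(\ve L) - \phi(\ve m_j^\ini)\rVert^2$ equal $\lVert\re_{W_i}(\ve L) - \phi(\ve m_j^\ini)\rVert^2$ for every $j$, so the tuple $(d_1 \upto d_4)$ is exactly the tuple of squared distances one would obtain if all four microphones heard the echo from the single wall $W_i$. The geometric reconstruction procedure referenced in step~\ref{1aDetect6} (\cite{Boutin:Kemper:2019}, Proposition~1.1) recovers a wall from the four squared distances together with the known inter-microphone distances $D_{i,j}$; since the input tuple is realized by the actual wall $W_i$, the reconstruction returns $W_i$ (here one invokes that the reconstruction is well-defined, i.e.\ the mirror point, equivalently the wall, is uniquely determined by the squared distances to four non-coplanar points, which holds precisely because the microphones do not lie on a common plane). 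Therefore the ``ghost wall'' output by the algorithm is in fact $W_i \in \mathcal W$, contradicting the assumption that it is a ghost. Hence no ghost wall is detected and $\phi$ is good.

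I would organize the write-up as: (i) recall what ``bad position'' means in terms of \cref{1aDetect}; (ii) extract from a detected wall a tuple $(d_1 \upto d_4)$ with $d_i = \lVert\re_{W_i}(\ve L) - \phi(\ve m_i^\ini)\rVert^2$ and $f_D(d_1 \upto d_4) = 0$; (iii) apply the really-good hypothesis to get the special index $i$; (iv) conclude that $(d_1 \upto d_4)$ is the distance tuple of the genuine wall $W_i$, so the reconstruction yields $W_i$, not a ghost. The main obstacle is step~(iv): one must be careful that the reconstruction in step~\ref{1aDetect6} really is determined by the squared-distance tuple (so that two tuples that agree produce the same wall), and that a mirror point is uniquely pinned down by its squared distances to the four non-coplanar microphone positions — this is where the non-coplanarity hypothesis is used, and it is essentially the injectivity statement underlying \cite{Boutin:Kemper:2019}, Proposition~1.1. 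A minor point to address is the degenerate possibility $\re_{W_j}(\ve L) = \re_{W_i}(\ve L)$ allowing $W_j \neq W_i$ as planes (reflections coinciding), but this does not affect the argument since we only need the \emph{reconstructed} wall to lie in $\mathcal W$, and $W_i$ does.
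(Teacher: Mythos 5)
Your proposal is correct and follows essentially the same route as the paper's own argument: extract the walls $W_1 \upto W_4$ behind a detected tuple, apply the really-good condition to obtain the index~$i$, and use non-coplanarity of the microphones to conclude that the reconstructed mirror point must equal $\re_{W_i}(\ve L)$. The only difference is that the paper makes your step~(iv) explicit with a short scalar-product computation showing that two mirror points with equal squared distances to four non-coplanar points coincide, whereas you invoke this uniqueness as the injectivity underlying the cited reconstruction result; that is the same fact and not a gap.
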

  Indeed, with the notation of \cref{1aDetect}, let
  $(d_1 \upto d_4) \in \mathcal D_1 \times \cdots \times \mathcal
  D_4$. For each~$j$ there exists a wall $W_j \in \mathcal W$ such
  that $d_j = \lVert\re_{W_j}(\ve L) - \phi(\ve m_j^\ini)\rVert^2$. If
  the algorithm detects a wall from the tuple $(d_1 \upto d_4)$ , then
  $f_D(d_1 \upto d_4) = 0$, so by hypothesis we have an~$i$ such that
  \begin{equation} \label{eqij}%
    \Vert\re_{W_i}(\ve L) - \phi(\ve m_j^\ini)\rVert^2 =
    \Vert\re_{W_j}(\ve L) - \phi(\ve m_j^\ini)\rVert^2 = d_j \quad
    \text{for all} \ j.
  \end{equation}
  Step~\ref{1aDetect6} of the algorithm computes a wall whose mirror
  point~$\ve s$ satisfies
  $\lVert\ve s - \phi(\ve m_j^\ini)\rVert^2 = d_j$ for all~$j$,
  so~\eqref{eqij} implies
  \[
    \bigl\langle\ve s - \re_{W_i}(\ve L),\phi(\ve
    m_j^\ini)\bigr\rangle = \frac{1}{2}\bigl(\lVert\ve s\rVert^2 -
    \lVert\re_{W_i}(\ve L)\rVert^2\bigr),
  \]
  where $\langle\cdot,\cdot\rangle$ denotes the standard scalar
  product.
%   This can be derived from expanding $\lVert\ve s\rVert^2 = \lVert\ve s - \phi(\ve m_j^\ini) + \phi(\ve m_j^\ini)\rVert^2$, same for $\lVert\re_{W_i}(\ve L)\rVert^2$, and subtracting.
  This holds for every $\phi(\ve m_j^\ini)$, so if $\ve s - \re_{W_i}(\ve L) \ne 0$ this would imply that the vectors $\phi(\ve m_j^\ini)$ lie on a common affine plane. But the~$\ve m_j^\ini$ and therefore also
  the~$\phi(\ve m_j^\ini)$ are not coplanar, so
  $\ve s = \re_{W_i}(\ve L)$. This means that the wall computed by the algorithm is
  $W_i$, a wall that actually exists. Therefore the algorithm
  detects no ghost walls.

  \begin{claim} \label{2Claim2}%
    Let $\ve s_1 \upto \ve s_4 \in \RR^3$ be vectors that do not form an unlucky stack. If
    \begin{equation} \label{eqClaim2h}%
      f_D\bigl(\lVert\ve s_1 - \phi(\ve m_1^\ini)\rVert^2 \upto
      \lVert\ve s_4 - \phi(\ve m_4^\ini)\rVert^2\bigr) = 0 \quad
      \text{for all} \ \phi \in \ASO(2),
    \end{equation}
    then there exists an $i \in \{1 \upto 4\}$ such that
    \begin{equation} \label{eqClaim2a}%
      \lVert\ve s_j - \phi(\ve m_j^\ini)\rVert = \lVert\ve s_i -
      \phi(\ve m_j^\ini)\rVert \quad \text{for all} \ \phi \in \ASO(2)
      \ \text{and all} \ j \in \{1 \upto 4\}.
    \end{equation}
  \end{claim}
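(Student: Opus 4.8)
To prove \cref{2Claim2} the plan is to deduce from the identical vanishing in~\eqref{eqClaim2h} the existence of a single ``mirror point'' realising all the prescribed distances, and then to use the available vehicle motions to show that this point cannot move and in fact equals one of the~$\ve s_i$. Write $\ve m_j:=\phi(\ve m_j^\ini)$ for $\phi\in\ASO(2)$; since~$\phi$ is an isometry, the pairwise squared distances $D_{i,j}=\lVert\ve m_i-\ve m_j\rVert^2$ do not change with~$\phi$, so~$f_D$ is one fixed polynomial. By~\eqref{eqPhi} the $z$-coordinate of~$\ve m_j$ equals that of~$\ve m_j^\ini$, which we call~$z_j$, and let~$\zeta_j$ denote the $z$-coordinate of~$\ve s_j$. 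Because $\ve m_1^\ini\upto\ve m_4^\ini$ are not coplanar, the points $\ve m_1\upto\ve m_4$ form a non-degenerate simplex for every~$\phi$, so the linear system expressing that a point have prescribed squared distances $u_1\upto u_4$ from $\ve m_1\upto\ve m_4$ has a unique solution, and this solution realises those distances precisely when $f_D(u_1\upto u_4)=0$ (cf.~[\citenumber{Boutin:Kemper:2019}, Proposition~1.1]). Applying this with $u_j:=\lVert\ve s_j-\ve m_j\rVert^2$, hypothesis~\eqref{eqClaim2h} produces for every~$\phi$ a point $\ve x(\phi)\in\RR^3$ with $\lVert\ve x(\phi)-\ve m_j\rVert=\lVert\ve s_j-\ve m_j\rVert$ for $j=1\upto4$. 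The coefficient determinant of that linear system is, up to a nonzero constant, the volume of the fixed initial simplex and hence independent of~$\phi$, so by Cramer's rule $\ve x(\phi)$ is a polynomial --- in particular continuous --- function of~$\phi$ on the connected variety~$\ASO(2)$. Finally, by uniqueness of the realising point, \eqref{eqClaim2a} holds with index~$i$ if and only if $\ve x(\phi)=\ve s_i$ for all~$\phi$; thus it suffices to show that $\ve x(\cdot)$ is constant and equal to one of the~$\ve s_j$.

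Next I would exploit the two kinds of vehicle motion separately. Write a general $\phi\in\ASO(2)$ as a rotation~$B$ about the $z$-axis followed by a translation~$\ve t$ in the $x$-$y$-plane, and first fix~$B$ while letting~$\ve t$ vary. Subtracting, for $j=2,3,4$, the $j=1$ version of the identity $\lVert\ve x-\ve m_j\rVert^2=\lVert\ve s_j-\ve m_j\rVert^2$ from the $j$-th one shows that $\langle\ve x,\ve m_j-\ve m_1\rangle$ is affine in~$\ve t$; since $\ve m_2-\ve m_1,\ve m_3-\ve m_1,\ve m_4-\ve m_1$ form a basis of~$\RR^3$, this forces $\ve x=\ve x(\ve t)$ itself to be affine, say $\ve x=L_B\ve t+\ve x_B^0$ with~$L_B$ linear. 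Comparing the parts linear in~$\ve t$ shows that $B^{-1}\circ L_B$ is a fixed linear map independent of~$B$, and comparing the part quadratic in~$\ve t$ of any single equation gives $\lVert L_B\ve t\rVert^2=2\langle L_B\ve t,\ve t\rangle$ for all~$\ve t$. Substituting $L_B=B\circ L_0$ with~$L_0$ fixed, the left-hand side of this last identity becomes independent of~$B$; writing a planar rotation as $\cos\theta\cdot I+\sin\theta\cdot J$ and comparing the resulting coefficients then forces $L_0=0$, hence $L_B=0$ for every~$B$. Consequently~$\ve x$ does not depend on~$\ve t$. Writing $\ve x=\ve x(B)$ and returning to the original distance equations, the parts linear in~$\ve t$ now say that~$\ve x(B)$ and every~$\ve s_j$ share the same $x$- and $y$-coordinates --- so $\ve s_1\upto\ve s_4$ lie on a common vertical line --- while the parts constant in~$\ve t$ give $\bigl(x_3(B)-z_j\bigr)^2=(\zeta_j-z_j)^2$ for every~$j$, where~$x_3(B)$ denotes the $z$-coordinate of~$\ve x(B)$.

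To finish, fix~$B$. For each~$j$ the last relation gives $x_3(B)=\zeta_j$ or $x_3(B)=2z_j-\zeta_j$. If the second alternative held for \emph{every}~$j$, then $\zeta_j=2z_j-x_3(B)$ for all~$j$, so the $z$-coordinate of $\ve s_j-\ve s_i$ would equal $2(z_j-z_i)$, i.e.\ twice the $z$-coordinate of $\ve m_j-\ve m_i$; together with the vertical-line property, this is exactly deception by an unlucky stack of mirror points as in~\cref{Stack}, contrary to hypothesis. Hence for every~$B$ we have $x_3(B)\in\{\zeta_1\upto\zeta_4\}$, that is, $\ve x(\phi)\in\{\ve s_1\upto\ve s_4\}$ for all~$\phi$. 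Since $\ve x(\cdot)$ is continuous, $\ASO(2)$ is connected, and $\{\ve s_1\upto\ve s_4\}$ is finite, $\ve x(\cdot)$ must be constant, equal to some~$\ve s_i$; by the equivalence noted at the end of the first paragraph, this is precisely~\eqref{eqClaim2a}.

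The step I expect to be the main obstacle is the implication ``$\lVert L_B\ve t\rVert^2=2\langle L_B\ve t,\ve t\rangle$ for all~$\ve t$ and all rotations~$B$ $\Longrightarrow$ $L_B=0$''. This is exactly where the restricted --- but still genuinely rotational --- mobility of a ground-based vehicle enters: for a single fixed~$B$ the realising point~$\ve x$ could a priori depend on the translation (a reflection-type~$L_B$ satisfies the identity for that~$B$), and it is only the freedom to rotate that rules this out. The remaining ingredients --- the Cayley--Menger reduction in the first paragraph and the expansions of the distance equations --- are routine.
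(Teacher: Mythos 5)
Your proposal is correct, but it proves \cref{2Claim2} by a genuinely different route than the paper. The paper normalizes the data $\ve s_i$, $\ve m_i^\ini$ under a group action and then reduces the claim to ideal-membership statements (\cref{2Claim3}, conditions \eqref{eqClaim3a} and \eqref{eqClaim3b}) that are verified by Gr\"obner-basis computations in MAGMA, so no human-readable reason for the result is extracted. You instead exploit the fact---the same one underlying Step~\ref{1aDetect6} of \cref{1aDetect} and the proof of \cref{2Claim1} via [\citenumber{Boutin:Kemper:2019}, Proposition~1.1]---that for non-coplanar microphones $f_D(u_1 \upto u_4)$ equals a nonzero constant (essentially the squared volume of the microphone tetrahedron) times $\lVert\ve x_0 - \ve m_1\rVert^2 - u_1$, where $\ve x_0$ is the unique solution of the trilateration linear system; hence the realizing point $\ve x(\phi)$ exists, is unique, and depends polynomially on $\phi$. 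Tracking that point, separating translations from rotations, and comparing homogeneous parts in the translation (with the rotation entering only through the $\cos\theta$, $\sin\theta$ coefficients that force $L_0=0$) then replaces the machine computation, and the connectedness of $\ASO(2)\cong\operatorname{SO}(2)\times\RR^2$ legitimately upgrades ``$\ve x(\phi)\in\{\ve s_1\upto \ve s_4\}$ for each $\phi$'' to a single index~$i$ valid for all~$\phi$. Each approach buys something: the computational proof transfers mechanically to the hovering-drone case (\cref{2tHovering}) and to variants of the group, whereas yours explains conceptually why the unlucky stack is the only obstruction and shows en route that \eqref{eqClaim2h} already forces the $\ve s_i$ onto a common vertical line---matching the computational observation that the ideals $D_i$ were not needed for $k\le 4$. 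The one step you should spell out is the ``$f_D=0$ implies a realizing point exists'' direction for arbitrary inputs, since your construction of $\ve x(\phi)$ rests entirely on it; it holds precisely because the $\ve m_j^\ini$ affinely span $\RR^3$, which is a hypothesis of \cref{2tMain}.
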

  Before proving the claim, we show that it implies the theorem. For
  $W_1 \upto W_4 \in \mathcal W$, consider the set
  \[
    \mathcal U_{W_1 \upto W_4} := \bigl\{\phi \in \ASO(2) \mid
    f_D\bigl(\lVert\re_{W_1}(\ve L) - \phi(\ve m_1^\ini)\rVert^2 \upto
    \lVert\re_{W_4}(\ve L) - \phi(\ve m_4^\ini)\rVert^2\bigr) \ne
    0\bigr\}.
  \]
  Since
  $f_D\bigl(\lVert\re_{W_1}(\ve L) - \phi(\ve m_1^\ini)\rVert^2 \upto
  \lVert\re_{W_4}(\ve L) - \phi(\ve m_4^\ini)\rVert^2\bigr)$ depends
  polynomially on the coefficients of the matrix $A$ defining~$\phi$,
  and since $\ASO(2)$ is an irreducible variety, it follows that if
  $\mathcal U_{W_1 \upto W_4} \ne \emptyset$, then its complement in
  $\ASO(2)$ has dimension strictly less than $\dim(\ASO(2)) =
  3$. Therefore the intersection
  \[
    \mathcal U := \bigcap_{\substack{W_1 \upto W_4 \in \mathcal W\
        \text{such that} \\ \mathcal U_{W_1 \upto W_4} \ne \emptyset}}
    \mathcal U_{W_1 \upto W_4}
  \]
  has a complement of dimension~$\le 2$. To prove the theorem, it
  suffices to show that all $\phi \in \mathcal U$ are good under the
  hypothesis that there is no unlucky stack of mirror points. So let $\phi \in \mathcal U$. By \cref{2Claim1} it is enough
  to show that~$\phi$ is really good, so let
  $W_1 \upto W_4 \in \mathcal W$ such that
  $f_D\bigl(\lVert\re_{W_1}(\ve L) - \phi(\ve m_1^\ini)\rVert^2 \upto
  \lVert\re_{W_4}(\ve L) - \phi(\ve m_4^\ini)\rVert^2\bigr) =
  0$. Since $\phi \in \mathcal U$, this implies
  $\mathcal U_{W_1 \upto W_4} = \emptyset$. Now we apply
  \cref{2Claim2} (which we are assuming to be true), to
  $\ve s_i := \re_{W_i}(\ve L)$. This tells us that there is
  an~$i$ such that
  $\lVert\ve s_j - \phi(\ve m_j^\ini)\rVert = \lVert\ve s_i - \phi(\ve
  m_j^\ini)\rVert$ holds for all~$j$. (The claim makes this assertion
  for all~$\phi' \in \ASO(2)$, but we only need it for
  $\phi' = \phi$.)  But this is just what it means for $\phi$ to be
  really good.

  So we are left with proving \cref{2Claim2}. Thus we are given four
  non-coplanar vectors~$\ve m_i^\ini$ and four vectors
  $\ve s_1,\ve s_2,\ve s_3,\ve s_4 \in \RR^3$ that do not form an unlucky stack. To make the computations in the final part of
  the proof feasible, we ``preprocess'' the given data. For this, we
  use the group $G$ of all maps
  \[
    \mapl{\psi = \psi_{\sigma,\alpha,v_0}}{\RR^3}{\RR^3}{v}{\alpha
      \cdot \sigma(v) + v_0}
  \]
  where~$\sigma$ is an orthogonal map sending the $x$-$y$-plane to
  itself, $0 \ne \alpha \in \RR$, and $v_0 \in \RR^3$.  It is crucial
  but straightforward to check that $G$ normalizes $\ASO(2)$, i.e.,
  $\psi^{-1} \phi \psi \in \ASO(2)$ for $\psi \in G$ and
  $\phi \in \ASO(2)$.

  Now assume that we have chosen a suitable
  $\psi = \psi_{\sigma,\alpha,v_0}\in G$ and a $\phi_0 \in \ASO(2)$
  such that we can prove \cref{2Claim2} for
  $\tilde{\ve s}_i := \psi(\ve s_i)$ and
  $\tilde{\ve m}_i^\ini := (\phi_0 \circ \psi)(\ve m_i^\ini)$. To show
  that \cref{2Claim2} then also follows for the original~$\ve s_i$
  and~$\ve m_i^\ini$, we first verify that the $\tilde{\ve s}_i$ do not form an unlucky stack. First, if two of the $\ve s_i$ have
  different~$x$- or $y$-coordinates, then the same is true for
  the~$\tilde{\ve s}_i$. Moreover, the $z$-coordinate of
  $\tilde{\ve s}_i - \tilde{\ve s}_j$ is~$\pm \alpha$ times the
  $z$-coordinate of $\ve s_i - \ve s_j$; and, likewise, the
  $z$-coordinate of $\tilde{\ve m}_i^\ini - \tilde{\ve m}_j^\ini$
  is~$\pm \alpha$ times the $z$-coordinate of
  $\ve m_i^\ini - \ve m_j^\ini$. It follows that indeed the
  $\tilde{\ve s}_i$ do not form an unlucky stack.

  Now assume that
  $f_D\bigl(\lVert\ve s_1 - \phi(\ve m_1^\ini)\rVert^2 \upto \lVert\ve
  s_4 - \phi(\ve m_4^\ini)\rVert^2\bigr) = 0$ for all
  $\phi \in \ASO(2)$. We need to deduce the
  assertion~\eqref{eqClaim2a} of \cref{2Claim2} from this. Let
  $\phi' \in \ASO(2)$. Then
  $\phi := \psi^{-1} \phi' \phi_0 \psi \in \ASO(2)$ and
  $\phi'(\tilde{\ve m}_i^\ini) = (\psi \circ \phi)(\ve m_i^\ini)$. For
  $i,j \in \{1 \upto 4\}$ we obtain
  \begin{equation} \label{eqdij}%
    \lVert\tilde{\ve s}_i - \phi'(\tilde{\ve m}_j^\ini)\rVert^2 =
    \left\lVert \psi\bigl(\ve s_i - \phi(\ve
      m_j^\ini)\bigr)\right\rVert^2 = \alpha^2
    \left\lVert\sigma\bigl(\ve s_i - \phi(\ve
      m_j^\ini)\bigr)\right\rVert^2 = \alpha^2 \lVert\ve s_i -
    \phi(\ve m_j^\ini)\rVert^2.
  \end{equation}
  For the
  $D_{i,j} = \lVert\phi(\ve m_i^\ini) - \phi(\ve m_j^\ini)\rVert^2$
  that go into the polynomial~$f_D$, the same calculation shows
  $\tilde{D}_{i,j} := \lVert\phi'(\tilde{\ve m}_i^\ini) -
  \phi'(\tilde{\ve m}_j^\ini)\rVert^2 = \alpha^2
  D_{i,j}$. With~\eqref{eqdij} we obtain
  \begin{multline*}
    f_{\tilde{D}}\bigl(\lVert\tilde{\ve s}_1 - \phi'(\tilde{\ve
      m}_1^\ini)\rVert^2 \upto \lVert\tilde{\ve s}_4 -
    \phi'(\tilde{\ve m}_4^\ini)\rVert^2\bigr) = \det
    \begin{pmatrix}
      0 & \tilde{d}_1 & \cdots & \tilde{d}_4 & 1 \\
      \tilde{d}_1 & \tilde{D}_{1,1} & \cdots & \tilde{D}_{1,4} & 1 \\
      \vdots & \vdots & & \vdots & \vdots \\
      \tilde{d}_4 & \tilde{D}_{4,1} & \cdots & \tilde{D}_{4,4} & 1 \\
      1 & 1 & \cdots & 1 & 0
    \end{pmatrix} = \\
    \alpha^8 \det
    \begin{pmatrix}
      0 & d_1 & \cdots & d_4 & 1 \\
      d_1 & D_{1,1} & \cdots & D_{1,4} & 1 \\
      \vdots & \vdots & & \vdots & \vdots \\
      d_4 & D_{4,1} & \cdots & D_{4,4} & 1 \\
      1 & 1 & \cdots & 1 & 0
    \end{pmatrix} = \alpha^8 f_D\bigl(\lVert\ve s_1 - \phi(\ve
    m_1^\ini)\rVert^2 \upto \lVert\ve s_4 - \phi(\ve
    m_4^\ini)\rVert^2\bigr) = 0.
  \end{multline*}
  Since we are assuming \cref{2Claim2} for the $\tilde{\ve s}_i$ and
  $\tilde{\ve m}_i^\ini$, there is an $i \in \{1 \upto 4\}$ such that
  $\lVert\tilde{\ve s}_j - \phi'(\tilde{\ve m}_j^\ini)\rVert =
  \lVert\tilde{\ve s}_i - \phi'(\tilde{\ve m}_j^\ini)\rVert$ for
  all~$j$ and all $\phi' \in \ASO(2)$.

  Now let $\phi \in \ASO(2)$ and set
  $\phi' := \psi \phi \psi^{-1} \phi_0^{-1} \in
  \ASO(2)$. Then~\eqref{eqdij} yields
  \[
    \lVert\ve s_j - \phi(\ve m_j^\ini)\rVert = |\alpha|^{-1}
    \lVert\tilde{\ve s}_j - \phi'(\tilde{\ve m}_j^\ini)\rVert =
    |\alpha|^{-1} \lVert\tilde{\ve s}_i - \phi'(\tilde{\ve
      m}_j^\ini)\rVert = \lVert\ve s_i - \phi(\ve m_j^\ini)\rVert
  \]
  for all~$j$, so indeed \cref{2Claim2} follows for the $\ve s_i$ and
  $\ve m_i^\ini$ if it is true for the
  $\tilde{\ve s}_i = \psi(\ve s_i)$ and
  $\tilde{\ve m}_i^\ini = (\phi_0 \circ \psi)(\ve m_i^\ini)$.

  We use this to simplify the~$\ve s_i$ and~$\ve m_i^\ini$ in five
  steps.
  % It is convenient to write
  % \[
  %   \begin{pmatrix}
  %     \ve s_1 & \ve s_2 & \ve s_3 & \ve s_4
  %   \end{pmatrix} =
  %   \begin{pmatrix}
  %     s_{1,1} & \cdots & s_{4,1} \\
  %     \vdots & & \vdots \\
  %     s_{1,3} & \cdots & s_{4,2}
  %   \end{pmatrix} \quad \text{and} \quad \begin{pmatrix}
  %     \ve m^\ini_1 & \ve m^\ini_2 & \ve m^\ini_3 & \ve m^\ini_4
  %   \end{pmatrix} =
  %   \begin{pmatrix}
  %     m_{1,1} & \cdots & m_{4,1} \\
  %     \vdots & & \vdots \\
  %     m_{1,3} & \cdots & m_{4,2}
  %   \end{pmatrix}.
  % \]
  First, writing
  $\ve s_i = \left(\begin{smallmatrix} s_{i,1} \\ s_{i,2} \\
      s_{i,3}\end{smallmatrix}\right)$ and $\ve m_i^\ini =
  \left(\begin{smallmatrix} m_{i,1} \\ m_{i,2} \\
      m_{i,3}\end{smallmatrix}\right)$, we use
  $\psi = \psi_{\id,1,v_0} \in G$ with
  $v_0 = \left(\begin{smallmatrix} - s_{1,1} \\ - s_{1,2} \\ -
      m_{1,3}\end{smallmatrix}\right)$. Then $\tilde{\ve s}_1$
  has~$x$- and $y$-coordinates ~$0$, and $\tilde{\ve m}_1^\ini$ has
  $z$-coordinate~$0$. Hence without loss we may assume these things of
  $\ve s_1$ and $\ve m_1^\ini$. The second step comes from a
  QR-decomposition if the top two rows of
  $S := (s_{i,j}) \in \RR^{3 \times 4}$. We have
  \[
    \begin{pmatrix}
      s_{2,1} & s_{3,1} & s_{4,1} \\
      s_{2,2} & s_{3,2} & s_{4,2}
    \end{pmatrix}
    = Q R
  \]
  with $Q \in \Or(2)$ orthognal and $R \in \RR^{2 \times 3}$ upper
  triangular. Now forming~$\sigma \in \Or(3)$ with $Q^{-1}$ as upper
  left part, and using $\psi_{\sigma,1,0}$, we may assume
  $s_{2,2} = 0$. A bit more can be done: if the upper
  $2 \times 4$-part of $S$ is nonzero and $k \in \{2,3,4\}$ is the
  number of the first nonzero column, then we may assume
  $s_{k,1} \ne 0$ and $s_{k,2} = 0$. As a third step, we use
  $\psi_{\id,\alpha,0}$ with $\alpha = s_{k,1}^{-1}$. This means we
  can additionally assume $s_{k,1} = 1$. Summing up the first three
  steps, we may assume
  \begin{equation} \label{eqS}%
    S = \begin{pmatrix}
      \ve s_1 & \ve s_2 & \ve s_3 & \ve s_4
    \end{pmatrix} =
    \begin{pmatrix}
      0 & 1 & b_1 & b_2 \\
      0 & 0 & b_3 & b_4 \\
      b_5 & b_6 & b_7 & b_8
    \end{pmatrix}
  \end{equation}
  with $b_i \in \RR$. (This is for the case~$k = 2$; for $k = 3$
  or~$4$, the upper part has more zeroes, and as a last case, which we
  indicate by setting $k := 5$, it may be all zeroes.) For the
  $\ve m_i^\ini$ we have only achieved $m_{1,3} = 0$, but so far we
  have only used transformations where $\phi_0 = \id$. Now, as the
  fourth step, we set $\phi_0 \in \ASO(2)$ to be the translation by
  the vector~$- \ve m_1$. This yields $\tilde{\ve m}_1^\ini = 0$, so
  we may assume $\ve m_1^\ini = 0$. The last step uses a
  QR-decomposition of the upper $2 \times 4$-part of
  $M_\ini := (m_{i,j}) \in \RR^{3 \times 4}$ as we did before for
  $S$. (Notice that $Q$ can be assumed special orthogonal.) So finally
  we may assume
  \begin{equation} \label{eqM}%
    M_\ini=
    \begin{pmatrix}
      \ve m^\ini_1 & \ve m^\ini_2 & \ve m^\ini_3 & \ve m^\ini_4
    \end{pmatrix} =
    \begin{pmatrix}
      0 & c_1 & c_2 & c_3 \\
      0 & 0 & c_4 & c_5 \\
      0 & c_6 & c_7 & c_8
    \end{pmatrix}    
  \end{equation}
  with $c_i \in \RR$. The hypothesis that the~$\ve m_i^\ini$ are not
  coplanar translates into $\det\left(\begin{smallmatrix} c_1 & c_2 &
      c_3 \\ 0 & c_4 & c_5 \\ c_6 & c_7 & c_8\end{smallmatrix}\right)
  \ne 0$.

  For $A = \left(\begin{smallmatrix} a_{1,1} & a_{1,2} & a_{1,3} \\
      a_{2,1} & a_{2,2} & a_{2,3}\end{smallmatrix}\right) \in \RR^{2
    \times 3}$ with
  $\left(\begin{smallmatrix} a_{1,1} & a_{1,2} \\ a_{2,1} &
      a_{2,2}\end{smallmatrix}\right) \in \RR^{2 \times 3}$ orthogonal
  of determinant~$1$, and for $S$ and $M_\ini$ as in~\eqref{eqS}
  and~\eqref{eqM}, there are polynomials
  $F(x_{1,1} \upto x_{2,3},y_1 \upto y_8,z_1 \upto z_8)$ and
  $F_{i,j}(x_{1,1} \upto x_{2,3},y_1 \upto y_8,z_1 \upto z_8)$ in~$22$
  indeterminates such that
  \[
    \lVert\ve s_i - \phi_A(\ve m_j^\ini)\rVert^2 = F_{i,j}(a_{1,1}
    \upto a_{2,3},b_1 \upto b_8,c_1 \upto c_8) \quad (i,j \in \{1
    \upto 4\})
  \]
  and
  \[
    f_D\bigl(\lVert\ve s_1 - \phi_A(\ve m_1^\ini)\rVert^2 \upto
    \lVert\ve s_4 - \phi_A(\ve m_4^\ini)\rVert^2\bigr) = F(a_{1,1}
    \upto a_{2,3},b_1 \upto b_8,c_1 \upto c_8).
  \]
  % \begin{equation} \label{2eqF}%
  %   \begin{split}
  %     \lVert\ve s_i - \phi_A(\ve m_j^\ini)\rVert^2 = F_{i,j}(a_{1,1}
  %     \upto a_{2,3},b_1 \upto b_8,c_1 \upto c_8) \quad (i,j \in \{1
  %     \upto 4\}) \\
  %     \intertext{and}
  %     f_D\bigl(\lVert\ve s_1 - \phi_A(\ve m_1^\ini)\rVert^2 \upto
  %     \lVert\ve s_4 - \phi_A(\ve m_4^\ini)\rVert^2\bigr) = F(a_{1,1}
  %     \upto a_{2,3},b_1 \upto b_8,c_1 \upto c_8).
  %   \end{split}
  % \end{equation}
  (In the case where $k \ge 3$, there are fewer~$b_i$ and hence
  fewer~$y_i$.)  Let $I \subset \RR[x_{1,1} \upto x_{2,3}]$ be the
  ideal generated by~$x_{1,1} - x_{2,2}$, $x_{1,2} + x_{2,1}$
  and~$x_{2,1}^2 + x_{2,2}^2$.  This is the vanishing ideal of
  $\ASO(2)$ as a subvariety of $\RR^6$. Choose a Gr\"obner basis $G$
  of $I$ with respect to an arbitrary monomial ordering, such as the
  ideal basis given above, and consider the normal forms
  $\tilde{F} := \NF_G(F)$ and $\tilde{F}_{i,j} := \NF_G(F_{i,j})$. Let
  $J \subseteq \RR[y_1 \upto y_8,z_1 \upto z_8]$ be the ideal
  generated by the coefficients of $\tilde{F}$, viewed as a polynomial
  in $x_{1,1} \upto x_{2,3}$ with coefficients in
  $\RR[y_1 \upto y_8,z_1 \upto z_8]$. Moreover, for $i = 1 \upto 4$,
  let $D_i \subseteq \RR[y_1 \upto y_8,z_1 \upto z_8]$ be the ideals
  generated by the coefficients of all
  $\tilde{F}_{i,j} - \tilde{F}_{j,j}$ ($j = 1 \upto 4$).  Also set $d
  := \det\left(\begin{smallmatrix} z_1 & z_2 & z_3 \\ 0 & z_4 & z_5 \\
      z_6 & z_7 & z_8\end{smallmatrix}\right)$.
  \begin{claim} \label{2Claim3}%
    If there is an~$r$ such that for each $k \in \{2 \upto 5\}$
    \begin{equation} \label{eqClaim3a}%
      \bigl((d) \cdot D_1 \cdots D_4\bigr)^r \subseteq J \quad
      \text{in the case} \ k \le 4
    \end{equation}
    or
    \begin{equation} \label{eqClaim3b}%
      \bigl((y_6 - y_5 - 2 z_6,y_7 - y_5 - 2 z_7,y_8 - y_5 - 2
      z_8)\cdot (d) \cdot D_1 \cdots D_4\bigr)^r \subseteq J \quad
      \text{in the case} \ k = 5,
    \end{equation}
    then \cref{2Claim2} and therefore the theorem follow. Recall
    that~$k$ is the number of the first nonzero column of the upper
    $2 \times 4$-submatrix of $S$, with $k = 5$ indicating that this
    submatrix is zero.
  \end{claim}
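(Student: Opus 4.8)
We have already reduced \cref{2Claim2} to the case of the preprocessed data $S$ and $M_\ini$, and we have already shown that \cref{2Claim2} implies the theorem, so it suffices to derive \cref{2Claim2} for the preprocessed data from the ideal containment in \eqref{eqClaim3a}, respectively \eqref{eqClaim3b}. Let $p$ denote the point whose coordinates are the entries $b_i$ of $S$ and $c_i$ of $M_\ini$, so that evaluating $J$, the $D_i$, and $d$ at $p$ means substituting $y_i \mapsto b_i$ and $z_i \mapsto c_i$. The core of the argument is a dictionary translating the conditions in \cref{2Claim2} into incidences of $p$ with the varieties of $J$, of the $D_i$, and of $(d)$.

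To build this dictionary, I would first note that reducing a polynomial modulo the Gr\"obner basis $G$ of $I \subseteq \RR[x_{1,1} \upto x_{2,3}]$ only manipulates monomials in the $x$-variables, with coefficients in $\RR[y,z]$; hence $\tilde F$ has no $x$-monomial divisible by a leading monomial of $G$, and this persists after the substitution $y \mapsto b$, $z \mapsto c$, so $\tilde F(x,b,c)$ is again a $G$-normal form in $\RR[x_{1,1} \upto x_{2,3}]$. Since also $F - \tilde F \in I \cdot \RR[x,y,z]$, we have $F(a,b,c) \equiv \tilde F(a,b,c) \pmod I$ for every matrix $A$; because $I$ is the vanishing ideal of $\ASO(2)$ and a nonzero $G$-normal form cannot lie in $I$, it follows that \eqref{eqClaim2h} holds if and only if $\tilde F(x,b,c) = 0$, i.e.\ if and only if $p$ lies in the variety of $J$. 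The identical reasoning applied to $F_{i,j} - F_{j,j}$, after squaring the nonnegative norms, shows that for a fixed $i$ the relation \eqref{eqClaim2a} holds for that $i$ if and only if $p$ lies in the variety of $D_i$. Finally, non-coplanarity of the $\ve m_j^\ini$ is exactly $d(p) \ne 0$; and for the unlucky-stack hypothesis I would observe that if $k \le 4$ then $\ve s_1$ has vanishing $x$-coordinate while $\ve s_k$ has $x$-coordinate $1$, so the $\ve s_i$ cannot lie on a common vertical line and the hypothesis is automatically satisfied, whereas if $k = 5$ the $\ve s_i$ all lie on the $z$-axis and, since $\ve m_1^\ini = 0$, the condition of \cref{Stack} (which it is enough to impose for the index pairs with $i = 1$) unpacks to $y_6 - y_5 - 2 z_6 = y_7 - y_5 - 2 z_7 = y_8 - y_5 - 2 z_8 = 0$; thus ``not an unlucky stack'' means that $p$ is \emph{not} in the variety of the linear ideal appearing in \eqref{eqClaim3b}.

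With the dictionary in place, the deduction is short. For any $g_i \in D_i$ ($i = 1 \upto 4$), the product $d \cdot g_1 \cdots g_4$ lies in $(d) \cdot D_1 \cdots D_4$, so by \eqref{eqClaim3a} its $r$-th power lies in $J$; evaluating at a point $p$ in the variety of $J$ and using that $\RR$ is a domain gives $d(p) \, g_1(p) \cdots g_4(p) = 0$. Hence $d(p) \ne 0$ forces $g_1(p) \cdots g_4(p) = 0$, and if in addition $p$ avoided the varieties of all four $D_i$ we could choose each $g_i$ with $g_i(p) \ne 0$, a contradiction; so the variety of $J$, intersected with the locus $d \ne 0$, is covered by the varieties of $D_1 \upto D_4$, and in the case $k = 5$ the same computation using \eqref{eqClaim3b} lets one additionally restrict to the complement of the variety of $(y_6 - y_5 - 2 z_6,\, y_7 - y_5 - 2 z_7,\, y_8 - y_5 - 2 z_8)$. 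Translating back: under the hypotheses of \cref{2Claim2} for the preprocessed data, $p$ lies in the variety of $J$, satisfies $d(p) \ne 0$, and---when $k = 5$---avoids that linear variety; hence $p$ lies in the variety of some $D_i$, which is exactly \eqref{eqClaim2a} for that $i$. This proves \cref{2Claim2}, and therefore the theorem.

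The step I expect to be the real obstacle is setting up the dictionary cleanly, in particular the point that a nonzero $G$-normal form in $\RR[x_{1,1} \upto x_{2,3}]$ cannot vanish identically on $\ASO(2)$. This reduces to the assertion that the three displayed generators already generate the full real vanishing ideal $I$ of $\ASO(2)$, which I would justify by eliminating $x_{1,1}$ and $x_{1,2}$ via the two linear generators to arrive at the single quadratic relation $x_{2,1}^2 + x_{2,2}^2 - 1$, and observing that this polynomial generates a real radical (indeed prime) ideal while the real points of $\ASO(2)$ are Zariski dense in its complexification. Once this is secured, the substitution argument, the ``a product vanishes, hence a factor vanishes'' argument, and the geometric translations are all routine. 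Verifying the ideal containments \eqref{eqClaim3a} and \eqref{eqClaim3b} themselves is, of course, a separate and purely computational task handled by computer algebra.
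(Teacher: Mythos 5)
Your argument is correct and takes essentially the same route as the paper's: both translate the hypotheses and conclusion of \cref{2Claim2} into vanishing or non-vanishing of the generators of $J$, the $D_i$, $(d)$, and the linear ideal at the point $(b_1 \upto b_8, c_1 \upto c_8)$, and then invoke the ideal containment---the paper merely runs this as a contrapositive, whereas you run it forward through your ``dictionary'' of equivalences. The only substantive addition is your justification that the displayed generators give the full real vanishing ideal of $\ASO(2)$ (note the third generator should read $x_{2,1}^2 + x_{2,2}^2 - 1$), a point the paper asserts without proof.
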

  In fact, we have already seen that to prove \cref{2Claim2}, we may
  assume the~$\ve s_i$ and~$\ve m_i^\ini$ to be given by~\eqref{eqS}
  and~\eqref{eqM}. Assume that the assertion~\eqref{eqClaim2a} of
  \cref{2Claim2} is not true, so for every~$i = 1 \upto 4$ there is a
  $j \in \{1 \upto 4\}$ and a $\phi = \phi_A \in \ASO(2)$ such that
  $\lVert\ve s_j - \phi_A(\ve m_j^\ini)\rVert \ne \lVert\ve s_i -
  \phi_A(\ve m_j^\ini)\rVert$. Then
  \[
    0 \ne F_{i,j}(a_{1,1} \upto a_{2,3},b_1 \upto b_8,c_1 \upto c_8) -
    F_{j,j}(a_{1,1} \upto a_{2,3},b_1 \upto b_8,c_1 \upto c_8).
  \]
  Since $F_{i,j} - \tilde{F}_{i,j}$ vanishes at $x_{i,j} = a_{i,j}$,
  we may replace $F_{i,j}- F_{j,j}$ in the above inequality by
  $\tilde{F}_{i,j}- \tilde{F}_{j,j}$. So there is a~$j$ such that some
  coefficient of $\tilde{F}_{i,j}- \tilde{F}_{j,j}$, viewed as a
  polynomial in $x_{1,1} \upto x_{2,3}$, does not vanish when
  evaluated at $y_i = b_i$ and $z_i = c_i$. This coefficient is one of
  the generators of $D_i$, so each $D_i$ contains an element that does
  not vanish at $y_i = b_i$ and $z_i = c_i$.

  Moreover, $d$ does not vanish when evaluated at $z_i = c_i$, and we
  might as well also evaluate it at~$y_i = b_i$ since the~$y_i$ do not
  occur in~$d$. So in the case $k \le 4$, \eqref{eqClaim3a} implies
  that there exists an element of $J$ that does not vanish when
  evaluated at~$y_i = b_i$ and~$z_i = c_i$.

  On the other hand, if~$k = 5$, then all~$\ve s_i$ share the
  same~$x$- and~$y$-coordinates, so the hypothesis that they do not form an unlucky stack translates into $b_i - b_5 \ne 2 c_i$ for
  some $i \in \{6,7,8\}$. So one of the generators of the first ideal
  in the product in~\eqref{eqClaim3b} does not vanish when evaluated
  at $y_i = b_i$. In this case~\eqref{eqClaim3b} yields the same
  conclusion: that $J$ contains an element that does not vanish when
  evaluated at~$y_i = b_i$ and~$z_i = c_i$.

  Because of the way $J$ was constructed, this means that at least one
  coefficient of $\tilde{F}$ does not vanish at $y_i = b_i$ and
  $z_i = c_i$, so
  \[
    \tilde{F}(x_{1,1} \upto x_{2,3},b_1 \upto b_8,c_1 \upto c_8) \ne
    0.
  \]
  The linearity of the normal form map implies
  $\NF_G\bigl(F(x_{1,1} \upto x_{2,3},b_1 \upto b_8,c_1 \upto
  c_8)\bigr) = \tilde{F}(x_{1,1} \upto x_{2,3},b_1 \upto b_8,c_1 \upto
  c_8)$, so we conclude that
  $F(x_{1,1} \upto x_{2,3},b_1 \upto b_8,c_1 \upto c_8)$ has nonzero
  normal form and therefore does not lie in $I$. Since $I$ is the
  vanishing ideal of $\ASO(2)$, this means that there exists a
  $\phi = \phi_A \in \ASO(2)$ such that
  \[
    0 \ne F(a_{1,1} \upto a_{2,3},b_1 \upto b_8,c_1 \upto c_8) =
    f_D\bigl(\lVert\ve s_1 - \phi(\ve m_1^\ini)\rVert^2 \upto
    \lVert\ve s_4 - \phi(\ve m_4^\ini)\rVert^2\bigr).
  \]
  So the hypothesis~\eqref{eqClaim2h} of \cref{2Claim2} is not true
  under the assumption that the assertion is not true. So indeed
  \cref{2Claim2} follows from~\eqref{eqClaim3a} and~\eqref{eqClaim3b}.

  It remains to verify~\cref{eqClaim3a} and~\eqref{eqClaim3b}, and this
  can be checked with the help of a computer. For the computation, we
  used MAGMA~[\citenumber{magma}] and proceeded as
  follows: %\m{Maybe say where the MAGMA-code is available.}
  \begin{itemize}
  \item We ran the following steps for each~$k \in \{2 \upto 5\}$. We
    will not introduce any notation to indicate the cases for
    different~$k$ below.
  \item It is straightforward to compute the polynomials $F$,
    $F_{i,j}$, $\tilde{F}$, and $\tilde{F}_{i,j}$ according to their
    definitions, and to pick out the sets of coefficients
    $C,C_{i,j} \subset \RR[y_1 \upto y_8,z_1 \upto z_8]$ of
    $\tilde{F}$ and the $\tilde{F}_{i,j}$.
  \item Using an additional indeterminate~$t$, we computed the set
    $C^\h$ of homogenizations of the polynomials in $C$ with respect
    to~$t$.
  \item We computed truncated Gr\"obner bases $G^\h$ of the ideal
    generated by $C^\h$ of rising degree.
  \item For each degree we computed the normal form
    $\NF_{G^\h}\bigl(t^i d^j\bigr)$ for all~$i$ and~$j$ such that
    $t^i d^j$ has the degree up to which the Gr\"obner basis was
    computed. If at least one of the normal forms is zero, this shows
    that $t^i d^j$ is an $\RR[y_1 \upto y_8,z_1 \upto z_8,t]$-linear
    combination of the polynomials in $C^\h$. Setting~$t = 1$ shows
    that $d^j \in J$, so~\cref{eqClaim3a} holds with $r = j$. As it
    turned out, this was successful for all $k \le 4$, and we never
    needed to go beyond degree~$11$. This means that in these cases
    the ideals $D_i$ are not actually needed in~\cref{eqClaim3a}.
  \item Finally we dealt with the case $k = 5$. Here, thanks to the
    reduced number of veriables, we were able to directly compute the
    ideal $J_0 := J:(d)^\infty$, and then $J_i := J_{i-1}:D_i^\infty$
    for $i = 1 \upto 4$. The result is
    $J_4 = (y_6 - y_5 - 2 z_6,y_7 - y_5 - 2 z_7,y_8 - y_5 - 2 z_8)$,
    which shows~\eqref{eqClaim3b}.
  \end{itemize}
  The total computation time less than two minutes.
  % Code is in ~/Magma/echoes_ASO2_fixed
\end{proof}

\begin{theorem}[A hovering drone and a fixed
  loudspeaker] \label{2tHovering}%
  Assume the same situation as in \cref{2tMain}, except that the
  vehicle can take positions in the same way as a hovering drone. Then
  the assertions from \cref{2tMain} hold.
\end{theorem}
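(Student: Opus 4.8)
The plan is to rerun the proof of \cref{2tMain}, the only change being that the group $\ASO(2)$ of vehicle positions is replaced by the four-dimensional group $\ASO(2)^+$ obtained from the maps in~\eqref{eqPhi} by allowing the third entry of the translation vector to be an arbitrary real number~$a_{3,3}$ --- the additional up-and-down degree of freedom. Under the correspondence $(a_{1,1} \upto a_{2,3},a_{3,3}) \leftrightarrow \phi$, this $\ASO(2)^+$ is an irreducible affine subvariety of $\RR^7$ whose vanishing ideal is generated by the same three polynomials as the one used for $\ASO(2)$ in the proof of \cref{2tMain}, now viewed in $\RR[x_{1,1} \upto x_{2,3},x_{3,3}]$ with~$x_{3,3}$ left unconstrained; in particular $\dim\ASO(2)^+ = 4$. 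The statement on the codimension of the wall arrangements that carry an unlucky stack of mirror points needs no change: \cref{Stack} already permits up-and-down movement, and the dimension count in the proof of \cref{2tMain} only manipulates mirror points, not vehicle positions.

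Next I would carry over the notion of a ``really good'' position and the statements of \cref{2Claim1,2Claim2} verbatim, reading $\ASO(2)^+$ for $\ASO(2)$. The proof of \cref{2Claim1} is unchanged, as it uses only that the group acts by isometries --- so that non-coplanarity of the microphones is preserved --- together with the non-coplanarity hypothesis. For \cref{2Claim2} one repeats the preprocessing: the normalizing group $G$ of the original proof still normalizes $\ASO(2)^+$, since the orthogonal maps~$\sigma$ occurring in $G$ fix the $x$-$y$-plane and hence the $z$-axis and $\operatorname{SO}(2)$ is normal in $\Or(2)$, so that $\psi^{-1}\phi\psi \in \ASO(2)^+$ for $\psi \in G$ and $\phi \in \ASO(2)^+$ --- now also for translation parts with nonzero $z$-component. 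The five reduction steps (QR-decompositions and scalings) then bring $S$ and $M_\ini$ into exactly the normal forms~\eqref{eqS} and~\eqref{eqM}, the sole difference being that the translation $\phi_0$ used in the fourth step now lies in the larger group, which is harmless.

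The reformulation of \cref{2Claim3} is likewise verbatim, except that $F$ and the $F_{i,j}$ now carry the extra indeterminate $x_{3,3}$ and hence live in $\RR[x_{1,1} \upto x_{2,3},x_{3,3},y_1 \upto y_8,z_1 \upto z_8]$; the ideal $J$, the ideals $D_i$, the polynomial $d$, the normal-form map $\NF_G$, and the inclusions~\eqref{eqClaim3a} and~\eqref{eqClaim3b} are all formed over this ring, and the derivation of \cref{2Claim2} and the theorem from those inclusions goes through word for word. The step that looks like the main obstacle is this final ideal-membership verification: the extra variable threatens a fresh and larger Gr\"obner computation. But it is no obstacle. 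A ground-based vehicle is precisely a hovering drone with $a_{3,3} = 0$, so specializing $x_{3,3} \mapsto 0$ turns the drone polynomial $F$ into the one from the proof of \cref{2tMain}, and this specialization commutes with $\NF_G$, which does not involve $x_{3,3}$. Hence $\tilde{F}|_{x_{3,3} = 0}$ equals the normal form already computed there, so the ideal $J$ for the hovering drone contains the ideal $J$ from that proof; the inclusions~\eqref{eqClaim3a} and~\eqref{eqClaim3b}, being monotone in $J$, then follow --- with the very same exponent $r$ --- from what was already verified by computer for \cref{2tMain}. So no new computation is needed, and the only genuine work is the routine bookkeeping of the preceding two paragraphs.
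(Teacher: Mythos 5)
Your setup --- enlarging $\ASO(2)$ to the four-dimensional group obtained by freeing the third component $a_{3,3}$ of the translation vector in~\eqref{eqPhi}, keeping the codimension count, carrying over \cref{2Claim1,2Claim2} and the preprocessing verbatim, and checking that the normalizing group $G$ still normalizes the larger group --- is exactly the paper's route: its proof of \cref{2tHovering} consists of rerunning the proof of \cref{2tMain} with this modified group. The gap is in your last paragraph, where you claim the final ideal-membership check comes for free. Your observation that specialization at $x_{3,3}=0$ commutes with $\NF_G$, so that the drone ideal $J$ contains the ground-based one, is correct. But the inclusions~\eqref{eqClaim3a} and~\eqref{eqClaim3b} are monotone only in $J$; they are \emph{anti}-monotone in the ideals $D_i$, which sit on the left-hand side, and the $D_i$ also grow in the drone case: each $\tilde{F}_{i,j}-\tilde{F}_{j,j}$ acquires an $x_{3,3}$-linear term whose coefficient is $-2$ times the difference of the $z$-coordinates of $\ve s_i$ and $\ve s_j$, so each $D_i$ gains new linear generators absent from the ground-based computation. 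For $k\le 4$ this does not matter, because the computation for \cref{2tMain} established $(d)^r\subseteq J$ without using the $D_i$ at all, and that inclusion does transfer by your monotonicity argument. For $k=5$, however, the verification genuinely saturates by the $D_i$ (via $J_0=J:(d)^\infty$ and $J_i=J_{i-1}:D_i^\infty$), and knowing $\bigl(\cdots D_1^{\mathrm{gr}}\cdots D_4^{\mathrm{gr}}\bigr)^r\subseteq J^{\mathrm{gr}}\subseteq J^{\mathrm{dr}}$ says nothing about products involving the new generators of the $D_i^{\mathrm{dr}}$. So~\eqref{eqClaim3b} for the drone does not follow formally from the ground-based computation; the paper simply reruns the computer verification with modified input (about four minutes).

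If you want to avoid the recomputation, the cleaner shortcut is semantic rather than ideal-theoretic: write every drone position as a vertical translation $\tau_t$ composed with an element of $\ASO(2)$, note that $\lVert\ve s_i-\tau_t\phi(\ve m_j^\ini)\rVert=\lVert\tau_{-t}(\ve s_i)-\phi(\ve m_j^\ini)\rVert$ and that the unlucky-stack property is invariant under the vertical translation $\tau_{-t}$ of the mirror points, apply the ground-based \cref{2Claim2} for each fixed~$t$, and then use that for each index~$i$ the set of~$t$ for which that~$i$ works is Zariski-closed in $\RR$, so one index works for all~$t$. That genuinely reduces the drone claim to the ground-based one; the monotonicity argument you actually wrote does not.
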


\begin{proof}
  The proof is exactly like the one of \cref{2tMain}. The only
  difference is that instead of $\ASO(2)$ we need to consider the
  group of all~$\phi$ as in~\eqref{eqPhi}, but with the last vector
  having a third component~$a_{3,3}$ instead of zero. The computer
  computations have to be run with modified input, and take about four
  minutes to finish. Optimizing the ``preprocessing'' of the data for
  the group used in this proof would probably shorten the computation
  time.
\end{proof}

\section{The two-dimensional case} \label{sDim2}

In this section we briefly consider the purely two-dimensional case: a vehicle moves in the plane, and all reflecting walls are also in this plane. A physical example of such a scene is a robot navigating in a warehouse, with no echoes coming back from either the ceiling or the floor, nor from any inclined walls. In this case only three microphones on the vehicle are required. The loudspeaker can be at a fixed position, but will more typically be mounted on the vehicle. In contrast to the case of a ground-based vehicle moving in 3D, we can also handle the case of a mounted loudspeaker here. In fact, this is the two-dimensional variant of the situation considered in~[\citenumber{Boutin:Kemper:2019}], and everything in that paper carries over directly to other dimensions. This includes the relation, given as~\eqref{1eqRel} in this paper, and the wall detection algorithm. Moreover, the MAGMA-programs used for the computational verifications in~[\citenumber{Boutin:Kemper:2019}] were written for general dimension. As it turn out, if the dimension is set to~$2$, the entire computations require only about one second. (They take about five minutes for the three-dimensional case considered in~[\citenumber{Boutin:Kemper:2019}].) Notice that for a two-dimensional modelling to be admissible, the microphones and the loudspeaker need to be contained in a common plane, which is parallel to the plane of motion.
The following result emerges.

\begin{theorem}[A vehicle in a two-dimensional scene] \label{3t2D}%
  Assume a vehicle can move in a two-dimensional scene, which contains a finite number of walls. Assume a loudspeaker is either placed at a fixed position in the scene or mounted on the vehicle, and that three microphones are mounted on the vehicle, but do not lie on a common line. In any case, the microphones and the loudspeaker need to be in a common plane, which is parallel to the plane of motion. Then almost all vehicle positions are good, again in the sense that all walls whose echoes are heard by every microphone are detected, but no ghost walls are detected.
\end{theorem}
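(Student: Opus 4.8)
The plan is to follow the same strategy used for \cref{2tMain} and \cref{2tHovering}, but in a genuinely simpler setting, namely the two-dimensional analogue of~[\citenumber{Boutin:Kemper:2019}]. Concretely, a vehicle position is now an element of the group $\ASE(2)$ of orientation-preserving rigid motions of $\RR^2$ (composed, if the loudspeaker is mounted on the vehicle, with the action on the loudspeaker position as well), and the microphone positions are $\phi(\ve m_i^\ini)$ for $i\in\{1,2,3\}$. First I would introduce, exactly as in the proof of \cref{2tMain}, the polynomial $f_D$ given by the Cayley--Menger determinant, but now a $4\times4$ one built from three squared distances $d_1,d_2,d_3$ and the three squared pairwise microphone distances $D_{i,j}$; this $f_D$ vanishes precisely when the three putative mirror-point distances are geometrically consistent with a single point in the plane. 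The wall-detection algorithm is the two-dimensional verbatim copy of \cref{1aDetect}, looping over triples $(d_1,d_2,d_3)\in\mathcal D_1\times\mathcal D_2\times\mathcal D_3$ and outputting a wall whenever $f_D(d_1,d_2,d_3)=0$.

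Next I would define a vehicle position $\phi$ to be \emph{good} if no ghost walls are detected, and---mirroring the ``very good'' notion---call $\phi$ \emph{very good} if, for any three walls $W_1,W_2,W_3\in\mathcal W$, the relation $f_D\bigl(\lVert\re_{W_1}(\ve L)-\phi(\ve m_1^\ini)\rVert^2,\lVert\re_{W_2}(\ve L)-\phi(\ve m_2^\ini)\rVert^2,\lVert\re_{W_3}(\ve L)-\phi(\ve m_3^\ini)\rVert^2\bigr)=0$ forces $W_1=W_2=W_3$. The argument that very good implies good is the two-dimensional copy of the argument in~[\citenumber{Boutin:Kemper:2019}]: if the algorithm outputs a wall from a triple coming from walls $W_1,W_2,W_3$ with mirror point~$\ve s$, then $\lVert\ve s-\phi(\ve m_j^\ini)\rVert^2=\lVert\re_{W_1}(\ve L)-\phi(\ve m_j^\ini)\rVert^2$ for all~$j$ would put the three non-collinear points $\phi(\ve m_j^\ini)$ on a common line unless $\ve s=\re_{W_1}(\ve L)$, hence the detected wall is real. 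Since the very good positions form a Zariski-open subset of the irreducible variety $\ASE(2)$, it suffices to exhibit, for every wall arrangement, a single very good position; then the complement of the very good positions has positive codimension and ``almost all'' follows.

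The heart of the proof is therefore the analogue of \cref{2Claim2}: given three points $\ve s_1,\ve s_2,\ve s_3\in\RR^2$ and three non-collinear initial microphone positions $\ve m_i^\ini\in\RR^2$, if $f_D\bigl(\lVert\ve s_1-\phi(\ve m_1^\ini)\rVert^2,\lVert\ve s_2-\phi(\ve m_2^\ini)\rVert^2,\lVert\ve s_3-\phi(\ve m_3^\ini)\rVert^2\bigr)=0$ for all $\phi\in\ASE(2)$, then $\ve s_1=\ve s_2=\ve s_3$. As in \cref{2tMain} I would preprocess the data using a normalizing group of similarities to put the~$\ve s_i$ and~$\ve m_i^\ini$ into a normal form with as few free parameters as possible (placing $\ve m_1^\ini$ at the origin, aligning one coordinate axis, scaling), so that $f_D$ becomes a polynomial $F$ in the entries of the motion matrix together with a handful of parameters $b_i$ (coordinates of the $\ve s_i$) and $c_i$ (coordinates of the $\ve m_i^\ini$). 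Writing $I$ for the vanishing ideal of $\ASE(2)$ in the ambient matrix space, taking normal forms modulo a Gröbner basis, and forming the ideal $J$ generated by the parameter-coefficients of $\NF(F)$, the claim reduces to checking an inclusion of the shape $\bigl((d)\cdot(\text{ideal forcing }\ve s_i\text{ distinct})\bigr)^r\subseteq J$, where $d$ is the non-collinearity determinant of the~$\ve m_i^\ini$; equivalently, that the saturation $J:(d)^\infty$ equals the ideal cutting out $\ve s_1=\ve s_2=\ve s_3$. This is exactly the kind of computation the MAGMA programs of~[\citenumber{Boutin:Kemper:2019}] were written to perform in arbitrary dimension, and the excerpt already asserts that for dimension~$2$ it terminates in about one second. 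The main obstacle is thus conceptual rather than computational: one must check that the dimension-$2$ specialization of the general-dimension argument of~[\citenumber{Boutin:Kemper:2019}] really does go through unchanged---in particular that the reduction from ``good'' to ``very good'' and the preprocessing step are valid with three microphones in the plane and with the loudspeaker optionally mounted on the vehicle---after which the result follows by invoking that verification together with the computer output.
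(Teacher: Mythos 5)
Your proposal is correct and follows essentially the same route as the paper: the paper's own justification of \cref{3t2D} is precisely that the general-dimension argument and MAGMA programs of~[\citenumber{Boutin:Kemper:2019}] specialize verbatim to dimension two (where the full planar motion group is available, so the ``very good position'' framework suffices and no unlucky stacks or ``really good'' positions are needed), with the computation finishing in about one second. Your write-up simply makes that specialization explicit.
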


In the two-dimensional situation the issue of unlucky stacks does not occur.

\bibliographystyle{mybibstyle} \bibliography{bib}

\end{document}